\documentclass[11pt,reqno]{amsart}
\usepackage[T1]{fontenc}
\usepackage{lmodern}
\usepackage{amsmath,amssymb,amsthm,mathtools}
\usepackage{microtype}
\usepackage[a4paper,margin=28mm]{geometry}
\usepackage{booktabs}
\usepackage{needspace}
\usepackage[colorlinks=true,linkcolor=blue!45!black,citecolor=blue!45!black,urlcolor=blue!45!black]{hyperref}
\usepackage{xcolor}
\hypersetup{pdftitle={Noninteger completely monotonic degrees of Stirling remainders and their derivatives},pdfauthor={Valmir Krasniqi}}
\newtheorem{theorem}{Theorem}[section]
\newtheorem{proposition}[theorem]{Proposition}
\newtheorem{lemma}[theorem]{Lemma}
\newtheorem{corollary}[theorem]{Corollary}
\newtheorem{conjecture}{Conjecture}

\theoremstyle{remark}

\newcommand{\CM}{\mathrm{CM}}
\newcommand{\dcm}{d_{\CM}}
\newcommand{\Lap}{\mathcal L}
\newcommand{\dd}{\,dt}
\numberwithin{equation}{section}
\allowdisplaybreaks[1]
\title[Noninteger degrees of Stirling remainders]{Noninteger completely monotonic degrees of Stirling remainders and their derivatives}
\author{Valmir Krasniqi}
\address{Institute of Science, Technology, Engineering and Mathematics (STEM), Rr. Nekibe Kelmendi Nr. 26, 10000 Prishtin\"e, Republic of Kosovo}
\email{valmir@stem-ks.org}
\date{October 22, 2026}
\subjclass[2020]{Primary 33B15, 26A48; Secondary 26A33, 44A10}
\keywords{Completely monotonic degree, Stirling remainder, gamma function, digamma function, fractional derivative, Laplace transform}

\begin{document}
\begin{abstract}
For Stirling expansions truncated after at least two Bernoulli terms, we disprove the three real-degree conjectures recorded by Mahmoud and Qi for the signed remainders of the logarithm of the gamma function and their derivatives.
We give a unified proof of the corresponding integer-degree formula, recovering known cases and resolving Zhu's conjectures for higher derivatives in this range.
Every real completely monotonic degree lies strictly between the predicted integer and the next integer.
We obtain a universal fractional lower bound through an explicit inverse function and a simpler rational bound valid for every derivative order.
A local estimate for the negative part of the Laplace kernel's derivative with respect to its logarithmic variable makes the lower improvement proportional to the reciprocal of the combined truncation and derivative order.
Together with a uniform upper bound, this determines the exact order of decay of the fractional excess for each fixed derivative order and for its supremum over all derivative orders.
Sharper kernel estimates give stronger bounds for the first two degrees.
The proof combines fractional positivity criteria, the classical Laplace representation, a sign characterization of the associated densities, and a convexity obstruction at the upper integer endpoint.
All estimates used in the proofs are analytic.
The exact real degrees and the optimal asymptotic constants remain open.
\end{abstract}
\maketitle

\section{Introduction and main results}

The distinction between real and integer exponents is decisive for the completely monotonic degrees of Stirling remainders.
We show that the conjectured integer values are their exact integer parts, while every real degree in the truncation range considered here is strictly larger.
We also determine the order of decay of this fractional excess as the truncation order increases.

\subsection{Notation and setting}

A function $f\in C^\infty(0,\infty)$ is completely monotonic if
\[
(-1)^j f^{(j)}(x)\ge0
\qquad(x>0,\ j=0,1,2,\ldots).
\]
We write $f\in\CM$ for this property.
Miller and Samko \cite{MillerSamko2001} survey its basic properties and examples from special functions.
For such a function with $f(x)\to0$ as $x\to\infty$, define its real completely monotonic degree by
\begin{equation}\label{eq:degree}
\dcm(f)=\sup\{\rho\ge0:x^\rho f(x)\in\CM\}.
\end{equation}
The supremum is taken over real exponents, a distinction central to this paper.
The notion of completely monotonic degree was introduced by Guo and Qi \cite{GuoQi2012}, who determined an exact degree for a function involving the trigamma function.

Let the Bernoulli numbers be determined by
\[
\dfrac{z}{e^z-1}=\sum_{j=0}^{\infty}\dfrac{B_j}{j!}z^j
\qquad(|z|<2\pi).
\]
For an integer $n\ge0$, let
\begin{equation}\label{eq:Rn}
\begin{split}
R_n(x)=(-1)^n\biggl[
&\log\Gamma(x)-\left(x-\dfrac12\right)\log x+x
-\dfrac12\log(2\pi)\\
&-\sum_{k=1}^{n}\dfrac{B_{2k}}{2k(2k-1)x^{2k-1}}
\biggr],\qquad x>0,
\end{split}
\end{equation}
where an empty sum is zero.
Throughout the main results, we use the notation
\begin{equation}\label{eq:notation}
f_{n,m}=(-1)^mR_n^{(m)},\qquad
p=2n+m-1,\qquad d=p-1,
\qquad n\ge2,\quad m\in\mathbb Z_{\ge0}.
\end{equation}
The classical Laplace representation recalled below shows that each $f_{n,m}$ is completely monotonic and tends to zero at infinity.

\subsection{Conjectures and prior results}

Complete monotonicity of the signed Stirling remainders was established by Alzer \cite[Theorem~8]{Alzer1997}.
Koumandos \cite{Koumandos2006} used Euler--Maclaurin bounds for the Bernoulli generating function to give simpler proofs of complete monotonicity results for gamma and polygamma expressions.
Diamond and Straub \cite{DiamondStraub2016} extended this analysis to shifted asymptotic expansions with Bernoulli-polynomial coefficients.
Earlier classifications of real power weights preserving complete monotonicity for differences of shifted polygamma functions appear in Alzer and Berg \cite[Theorem~4.3]{AlzerBerg2006}.

Koumandos and Pedersen \cite{KP2009} studied complete monotonicity of positive order and the Laplace kernels of Stirling remainders.
Their Remark~1.6 also gives a fractional-order formulation.
They later related the Laplace kernels of Euler and Barnes gamma remainders to absolutely monotonic functions \cite{KP2011}.
These structural results provide the setting for the degree questions considered below.

Qi and Liu \cite{QL2019} proposed conjectures for the completely monotonic degrees of gamma and polygamma remainders.
We recall separately the three real-degree predictions in the formulation of Mahmoud and Qi \cite{MQ2022}, whose Definition~1.1 explicitly permits real degrees.
The labels below are local to this paper, and the statements are restricted to the truncation range $n\ge2$.
Conjectures~\ref{conj:remainder}, \ref{conj:first}, and~\ref{conj:higher} correspond, respectively, to \cite[(2.5), (2.7), (2.9)]{MQ2022}.

\Needspace{6\baselineskip}
\begin{conjecture}\label{conj:remainder}
For every integer $n\ge2$,
\[
\dcm(R_n)=2n-2.
\]
\end{conjecture}

\Needspace{6\baselineskip}
\begin{conjecture}\label{conj:first}
For every integer $n\ge2$,
\[
\dcm(-R_n')=2n-1.
\]
\end{conjecture}

\Needspace{6\baselineskip}
\begin{conjecture}\label{conj:higher}
For all integers $n\ge2$ and $m\ge2$,
\[
\dcm\bigl((-1)^mR_n^{(m)}\bigr)=2n+m-2.
\]
\end{conjecture}

Together, Conjectures~\ref{conj:remainder}--\ref{conj:higher} assert
\begin{equation}\label{eq:conjecture}
\dcm(f_{n,m})=2n+m-2,
\qquad n\ge2,\quad m\ge0.
\end{equation}

Zhu \cite{Zhu2020} distinguished integer degrees and proved several low-order cases.
The remaining predictions for $R_n$ and $-R_n'$ are stated in \cite[(6.1), (6.2)]{Zhu2020} for $n\ge4$, while \cite[(6.3)]{Zhu2020} includes the $n\ge2$ branch for derivative orders $m\ge1$.
The following unified formulation combines these conjectures with the low-order cases already established there.

\Needspace{7\baselineskip}
\begin{conjecture}\label{conj:integer}
For all integers $n\ge2$ and $m\ge0$, the largest admissible integer exponent is
\[
\max\{j\in\mathbb Z_{\ge0}:x^j(-1)^mR_n^{(m)}(x)\in\CM\}
=2n+m-2.
\]
\end{conjecture}

Conjecture~\ref{conj:integer} concerns integer exponents only; it does not exclude admissible real exponents between consecutive integers.
The cases $n=0,1$ in the cited papers are outside the scope of the present results.
Xu and Cen \cite{XC2020} determined exact degrees in several of those cases and corrected some earlier predictions.
The present results concern the distinct range $n\ge2$.

\subsection{Main results and resolution of the conjectures}

Mahmoud and Qi \cite[Theorem~2.1]{MQ2022} proved
$2n-1\le\dcm(-R_n')<2n$ for $n\ge2$.
Bouali \cite[Proposition~1.1]{Bouali2022} states the companion bound
$2n-2\le\dcm(R_n)<2n-1$ in a preprint.
The Mahmoud--Qi interval already gives $\lfloor\dcm(-R_n')\rfloor=2n-1$ for every $n\ge2$, and the interval stated by Bouali gives $\lfloor\dcm(R_n)\rfloor=2n-2$.
Zhu \cite[Theorems~3.1--3.2 and 4.2--4.3]{Zhu2020} had established the corresponding integer-degree cases with $n=2,3$ and $m=0,1$.
Thus the integer formula in Conjecture~\ref{conj:integer} includes cases already available in the literature.

For the trigamma remainder, Qi \cite{Qi2021} proved $4\le\dcm(R_2'')\le5$.
None of these intervals alone determines whether a positive fractional improvement of the lower integer endpoint is admissible.
The theorem below gives a common proof of the integer-degree formula for every derivative order and establishes the strict fractional improvement throughout the family, including the cases whose integer degree was already known.

The quantitative bounds use the increasing function
\begin{equation}\label{eq:Aintro}
A(\alpha)=\sum_{k=1}^{\infty}\dfrac{\alpha}{k(k-\alpha)},
\qquad 0<\alpha<1.
\end{equation}
Lemma~\ref{lem:A} shows that $A$ increases continuously from zero to infinity.
For every integer $p\ge3$, define
\begin{equation}\label{eq:betauniversal}
\lambda_p=\dfrac12\min\left\{\tan\dfrac{\pi}{p+1},\dfrac1{\sqrt p}\right\},
\qquad
\beta_p=A^{-1}(\lambda_p).
\end{equation}

\begin{theorem}\label{thm:general}
For all integers $n\ge2$ and $m\ge0$,
\begin{equation}\label{eq:general}
d+\dfrac1{2p+1}<d+\beta_p
\le\dcm(f_{n,m})<d+1,
\qquad p=2n+m-1,\quad d=p-1.
\end{equation}
In particular, every equality in \eqref{eq:conjecture} fails, and
\begin{equation}\label{eq:integer}
\left\lfloor\dcm(f_{n,m})\right\rfloor
=\max\{j\in\mathbb Z_{\ge0}:x^j f_{n,m}(x)\in\CM\}
=2n+m-2.
\end{equation}
\end{theorem}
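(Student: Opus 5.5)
We work from the classical Laplace representation. The Binet formula gives
\[
R_n(x)=\int_0^\infty \varphi_n(t)e^{-xt}\dd,
\qquad
\varphi_n(t)=\dfrac{(-1)^n}{t^2}\Bigl(\dfrac{t}{e^t-1}-1+\dfrac t2-\sum_{k=1}^n\dfrac{B_{2k}}{(2k)!}t^{2k}\Bigr)\ge0 .
\]
Differentiating then yields
\[
f_{n,m}(x)=\int_0^\infty h(t)e^{-xt}\dd,
\qquad h(t)=t^m\varphi_n(t).
\]
Near $t=0$ we have $h(t)\sim c\,t^{p-1}$ with $c>0$, where $p=2n+m-1$. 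As $t\to\infty$, $h(t)\sim t^{m-2}\cdot$(a polynomial of degree $2n$), so $h$ grows polynomially.

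For real $\rho\ge0$, the criterion for $x^\rho f\in\CM$ is the fractional positivity criterion, which is presumably established earlier in the paper. Write $x^\rho=x^{k}x^{\rho-k}$ with $k=\lceil\rho\rceil$. Then $x^\rho f\in\CM$ iff the density $D^\rho h$ is nonnegative, where $D^\rho$ is a Riemann--Liouville type fractional derivative in $t$: multiplication by $x^k$ corresponds to $h^{(k)}$, and $x^{\rho-k}$ corresponds to fractional integration of order $k-\rho$. This requires that boundary terms vanish, which holds since $h$ vanishes to order $p-1$ at $0$.

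\textbf{Integer statement.} For $j\le d=p-1$, $x^jf\in\CM$ iff $h^{(j)}\ge0$. Writing $t^{m+2}\varphi_n(t)\cdot t^{-2}$ and using the sign characterization of the densities (the derivatives of $\varphi_n$-type kernels have a single sign change pattern), we show $h^{(j)}\ge0$ for $j\le d$. Here the Koumandos--Pedersen absolute monotonicity of the kernel expansion gives $h^{(d)}\ge0$.

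For $\rho=d+1=p$, we use the convexity obstruction. The asymptotic $h(t)\sim c\,t^{p-1}$ only makes $h^{(p-1)}(0^+)>0$. The next term of the Taylor expansion of $\varphi_n$ has sign opposite to the leading one (alternating Bernoulli signs), so $h^{(p)}(0^+)<0$. Equivalently, $x^{p}f(x)$ has a nonzero limit whose approach violates monotonicity: $x^pf(x)=c\Gamma(p)-c'x^{-1}+\dots$ is increasing for large $x$. This gives $\dcm<d+1$ strictly, since the excess set is closed; and a real $\rho$ slightly below $d+1$ is also excluded by continuity of the same obstruction. This yields the upper bound and \eqref{eq:integer}, once the lower bound places $\dcm$ above $d$.

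\textbf{Fractional lower bound (main obstacle).} We need $x^{d+\alpha}f\in\CM$ for $\alpha\le\beta_p$. This is equivalent to positivity of the Marchaud-form fractional derivative of order $\alpha$ of $g=h^{(d)}$:
\[
\int_0^t\bigl(g(t)-g(s)\bigr)(t-s)^{-1-\alpha}ds+\dfrac{g(t)}{\alpha}\,t^{-\alpha}\ge0 .
\]
Passing to the logarithmic variable $u=\log t$ and setting $G(u)=g(e^u)$, the negative part of $G'$ is controlled locally by $\lambda_p$. The kernel estimate is that $G'/G\ge-\lambda_p$, obtained from the $\tan(\pi/(p+1))$ bound on zeros of the associated polynomial or entire function, and from a $1/\sqrt p$ bound via the variance of the density. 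Expanding $(1-s/t)^{-1-\alpha}$ in the series $\sum_k$ produces exactly $A(\alpha)$ as the worst-case loss, so positivity holds when $A(\alpha)\le\lambda_p$, that is, when $\alpha\le\beta_p$.

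Finally, $\beta_p>1/(2p+1)$ follows from $A(\alpha)<\alpha\pi^2/6\cdot\frac{1}{1-\alpha}$ combined with $\lambda_p\ge\frac{1}{2\sqrt p}$ or with the tangent bound, using a direct comparison with the monotone $A$ from Lemma~\ref{lem:A}. The hardest step is the sharp estimate of the negative part of $G'$; we would attempt it first via the partial-fraction expansion $\frac{t}{e^t-1}=1-\frac t2+\sum_k\frac{2t^2}{t^2+4\pi^2k^2}$, which makes $\varphi_n$ a positive combination of $t^{2n}/(t^2+4\pi^2k^2)$ terms whose logarithmic derivatives are explicit.
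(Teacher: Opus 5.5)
Your proposal has a genuine gap at each endpoint. For the upper bound, your small-$t$ asymptotics are off by two. Near $t=0$, $\varphi_n(t)=\frac{(-1)^n}{t^2}\sum_{k>n}\frac{B_{2k}}{(2k)!}t^{2k}\sim c\,t^{2n}$, so $h(t)=t^m\varphi_n(t)\sim c\,t^{2n+m}=c\,t^{p+1}$, not $c\,t^{p-1}$. (Vanishing of order $p-1$ would not even suffice for your boundary terms when $\lceil\rho\rceil=p$; order $p+1$ does.)

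Consequently $h^{(p)}(t)=c\,(p+1)!\,t+O(t^3)>0$ as $t\downarrow0$. By Watson's lemma, $x^pf_{n,m}(x)\sim c\,(p+1)!\,x^{-2}$ as $x\to\infty$, which is positive, tends to zero, and is eventually decreasing. So there is no obstruction at $t=0^+$ or $x=\infty$. The claims $h^{(p)}(0^+)<0$ and ``$x^pf=c\Gamma(p)-c'x^{-1}+\cdots$ is increasing'' are false, and no continuity argument can rest on them.

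The obstruction at $\rho=p$ lives at the opposite end, $x\downarrow0$, which reflects the large-$t$ behaviour of the density. The paper uses the exact identity $x^pf_{n,m}(x)=\sum_{k=1}^n(-1)^{n+k}c_k(2k-1)_mx^{2(n-k)}+(-1)^{n+m}x^pC^{(m)}(x)$, with $C=R_0$ and $c_k=|B_{2k}|/[2k(2k-1)]$. It shows that the logarithmic part contributes only $O(x^{2n-3}(1+|\log x|))$ to the second derivative. The term $k=n-1$ then gives $\lim_{x\downarrow0}(x^pf_{n,m})''=-2c_{n-1}(2n-3)_m<0$. Failure of convexity excludes $\rho=p$, and closedness of the admissible set gives $\dcm(f_{n,m})<p$.

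For the lower bound, the estimate $G'/G\ge-\lambda_p$ on which everything rests is false. Already for a single dilate, $g_p(u)=h_p(u/p)\to1-\cos u$ in $C^1[0,2\pi]$, so $-ug_p'(u)/g_p(u)\to3\pi/2$ at $u=3\pi/2$. The terms $k\ge2$ of $h^{(d)}$ do not change this. Near $u=2\pi$, where $g_p$ dips to $O(1/p)$, the ratio is of order $\sqrt p$. Meanwhile $\lambda_p\sim\pi/(2p)$.

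Nor would a normalized pointwise bound yield the condition $A(\alpha)\le\lambda_p$. A bound $G'\ge-\Lambda G$ only gives $g(tv)\le v^{-\Lambda}g(t)$. The resulting Marchaud integral $\int_0^1(v^{-\Lambda}-1)(1-v)^{-1-\alpha}\,dv$ diverges once $\Lambda\ge1$. For small $\Lambda$ it yields roughly $\Lambda A(\alpha)\le1$, with the roles of $\Lambda$ and $A(\alpha)$ reversed relative to your claim.

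Three ingredients are missing.
\begin{enumerate}
\item The reduction $h^{(d)}(t)=2d!\sum_ka_k^{-2n}h_p(t/a_k)$, with $a_k=2\pi k$ and $h_p(s)=1-\operatorname{Re}(1+is)^{-p}$. It follows from $F_p^{(p-1)}=(p-1)!\,h_p$ for $F_p(s)=s^{p+1}/(1+s^2)$. This lets you prove $\mathcal D^\alpha h_p\ge0$ term by term. It also gives $h^{(d)}>0$ directly, replacing the unspecified appeal to Koumandos--Pedersen and the ``single sign change'' claim.
\item The \emph{unnormalized} local criterion $h(t)-h(tv)\ge M_h(t)\log v$, where $M_h(t)=\max_{0\le u\le t}(-uh'(u))_+$. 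In the Marchaud formula it gives $\Gamma(1-\alpha)t^\alpha\mathcal D^\alpha h(t)\ge h(t)-A(\alpha)M_h(t)$. The requirement is then $h(t)\ge\lambda_pM_h(t)$: the slope maximum over $[0,t]$ is compared with $h$ at the endpoint $t$, not with $h(u)$ at the same point.
\item The kernel facts. First, $h_p'(s)=p(1+s^2)^{-(p+1)/2}\sin((p+1)\arctan s)\ge0$ on $(0,\tan\frac{\pi}{p+1}]$. Second, $|uh_p'(u)|\le q_p(u)=pu(1+u^2)^{-(p+1)/2}$, with $q_p$ maximal at $u=p^{-1/2}$. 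Third, $h_p(t)\ge1-(1+t^2)^{-p/2}\ge\frac t2q_p(t)$. Together these give $h_p(t)\ge\lambda_pM_{h_p}(t)$ for $t\ge\tan\frac{\pi}{p+1}$. The $p^{-1/2}$ is the peak of this envelope, not a variance.
\end{enumerate}
Your final comparison via $A(\alpha)\le\frac{\alpha}{1-\alpha}\cdot\frac{\pi^2}{6}$ at $\alpha=\frac1{2p+1}$ is correct for $p\ge3$. It only becomes meaningful once these three ingredients are in place.
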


\noindent\textbf{Status of the conjectures.}
Theorem~\ref{thm:general} disproves Conjectures~\ref{conj:remainder}--\ref{conj:higher} in every case covered by their stated ranges and gives a unified proof of Conjecture~\ref{conj:integer}, incorporating the earlier integer-degree results described above.
Thus $2n+m-2$ is the exact integer part of the real degree, while the real degree itself is strictly larger.
For example, Theorem~\ref{thm:general} gives
\[
4+\dfrac1{11}<\dcm(R_2'')<5.
\]
For the first two degrees we can improve the general kernel estimates.
Put
\begin{equation}\label{eq:firstconstants}
M_H=\dfrac{96}{107+51\sqrt{17}},
\qquad
b_K=1-\cos^5\left(\dfrac{2\pi}{5}\right).
\end{equation}

\begin{theorem}\label{thm:first}
Let $\beta_H=A^{-1}(1/M_H)$ and $\beta_K=A^{-1}(9b_K/5)$.
Then
\begin{equation}\label{eq:firstbounds}
2+\beta_H\le\dcm(R_2)<3,
\qquad
3+\beta_K\le\dcm(-R_2')<4.
\end{equation}
In particular,
\begin{equation}\label{eq:rationalfirst}
\dfrac83<\dcm(R_2)<3,
\qquad
\dfrac{32}{9}<\dcm(-R_2')<4.
\end{equation}
\end{theorem}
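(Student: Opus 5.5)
The plan is to reuse the machinery behind Theorem~\ref{thm:general}, replacing only the general kernel constant $\lambda_p$ by sharper, case-specific constants. The upper bounds $\dcm(R_2)<3$ and $\dcm(-R_2')<4$ need no new work. They are the cases $(n,m)=(2,0)$ and $(2,1)$ of \eqref{eq:general}, where $p=3,4$ and $d=2,3$, and they come from the convexity obstruction at the upper integer endpoint. So everything rests on the two lower bounds and on the rational consequences \eqref{eq:rationalfirst}.

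For the lower bounds, start from the classical Laplace representation $f_{2,m}(x)=\int_0^\infty e^{-xt}\,t^m h_2(t)\dd$, where $h_2\ge0$ is the Bernoulli-generating-function kernel. Write $x^{d+\alpha}f_{2,m}=x^\alpha\,\Lap[g]$ with $g=(t^mh_2)^{(d)}$, and take the sign characterization of this density from the proof of Theorem~\ref{thm:general}. The fractional positivity criterion used there then gives the following: if the negative part of the derivative of $g$ with respect to $s=\log t$, normalized by $g$, is bounded by a constant $\Lambda$, then every $\alpha\in(0,1)$ with $A(\alpha)\le\Lambda$ is admissible. I will take this criterion from the general proof in exactly that form; the point is only that its input constant can be improved.

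\textbf{Sharper kernel constants.} For $R_2$ (with $d=2$), I would show that the relevant ratio is bounded by $M_H$, so the criterion applies with $\Lambda=1/M_H$. My plan is to expand $g=h_2''$ in hyperbolic form, using $\tfrac1{e^t-1}=\tfrac12(\coth\tfrac t2-1)$. Estimating the log-derivative then reduces to bounding a quadratic in one auxiliary variable, and optimizing that quadratic should produce the surd $\sqrt{17}$ and hence $M_H=96/(107+51\sqrt{17})$. For $-R_2'$ (with $d=3$), the kernel $(t h_2)'''$ is governed by a fifth-order object. I would bound it below through a trigonometric or Chebyshev-type estimate whose extremal angle is $2\pi/5$, giving $b_K=1-\cos^5(2\pi/5)$. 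The factor $9/5$ should then come from the normalisation of the log-derivative at $d=3$, yielding $\Lambda=9b_K/5$. I expect these two kernel estimates to be the main obstacle, especially the sharp constant for $-R_2'$.

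\textbf{Rational bounds.} Since $A$ is increasing by Lemma~\ref{lem:A}, it suffices to check $A(2/3)<1/M_H$ and $A(5/9)<9b_K/5$. From the series, $A(\alpha)=-\gamma-\psi(1-\alpha)$.
\begin{itemize}
\item By Gauss's digamma theorem, $A(2/3)=\tfrac{\pi}{2\sqrt3}+\tfrac32\log3\approx2.555$, while $1/M_H=(107+51\sqrt{17})/96\approx3.305$. Rigorous decimal bounds on $\pi$, $\log 3$ and $\sqrt{17}$ suffice here.
\item For $A(5/9)$, I would sum the first $N$ terms exactly and bound the tail by $\sum_{k>N}\alpha/(k(k-\alpha))\le\alpha/(N-\alpha)$. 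This gives $A(5/9)\approx1.70$, to be compared with $9b_K/5\approx1.795$, using $\cos(2\pi/5)=(\sqrt5-1)/4$.
\end{itemize}
The second comparison is tighter, so $N$ must be taken moderately large (around $N=10$, combined with a sharper integral tail bound). This yields $\beta_H>2/3$ and $\beta_K>5/9$, and therefore \eqref{eq:rationalfirst}.
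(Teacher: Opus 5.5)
\textbf{The gap is in the lower bounds.} Your upper bounds are fine. Your rational comparisons also work, although they are more laborious than the paper's $A(2/3)<3<1/M_H$ and $A(5/9)<7/4<9b_K/5$; already $N=3$ with your tail bound gives $A(5/9)<1.75$. The gap lies in the lower bounds, which carry all the content of the theorem. There your plan would not produce $M_H$, $b_K$ or $9/5$.

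\textbf{Wrong object.} You propose to estimate the log-slope of the full kernel $g=(t^m r_2)^{(d)}$ in closed hyperbolic form, and you expect a quadratic whose optimum involves $\sqrt{17}$. But by \eqref{eq:kernelderiv}, $g(t)=2d!\sum_k a_k^{-4}h_p(t/a_k)$ is an infinite sum of rescaled copies. Its $\coth$/$\operatorname{csch}$ closed form gives a transcendental extremal problem. Nothing suggests it reduces to the algebraic constant $M_H$.

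The missing idea is the reduction through \eqref{eq:Q} and the scaling identity \eqref{eq:scaling}. Since $Q_{2,m,\alpha}$ is a positive combination of $(\mathcal D^\alpha h_p)(t/a_k)$, it suffices to prove $\mathcal D^\alpha h_p\ge0$ for the single rational kernels $H=h_3$ and $K=h_4$ of \eqref{eq:H}--\eqref{eq:K}. The constants are properties of these rational functions:
\begin{itemize}
\item $M_H=\max_{y\ge1}12y(y-1)/(1+y)^4$ with $y=s^2$, attained at $y=(5+\sqrt{17})/4$.
\item $b_K=\min_{[1/2,\infty)}K=K(\tan(2\pi/5))$, read off from $K'(s)=4(1+s^2)^{-5/2}\sin(5\arctan s)$.
\item The factor $9/5$ is $1/M$ for the global slope bound $sK'(s)\ge-5/9$, not a ``normalisation at $d=3$''. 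That bound needs its own proof: the certificate \eqref{eq:Kcertificate} for $s\ge1$ and a separate factorization for $s\le1$.
\end{itemize}
Your proposal names the target constants but gives no argument that produces them.

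\textbf{Misstated criterion.} Corollary~\ref{prop:positivity} needs three inputs: $h'\ge0$ on $(0,T]$, a floor $h\ge b$ on $[T,\infty)$, and an \emph{unnormalized} bound $th'(t)\ge-M$. It then gives admissibility for $A(\alpha)\le b/M$; Lemmas~\ref{lem:Hbounds} and~\ref{lem:Kbounds} supply $(1,1,M_H)$ and $(1/2,b_K,5/9)$. The proof inserts $h(t)-h(tv)\ge M\log v$ into \eqref{eq:Marchaud}.

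A pointwise bound $-tg'(t)\le c\,g(t)$ on the normalized ratio yields only $g(tv)\le v^{-c}g(t)$, which is not that estimate and does not lead to $A(\alpha)$. Your formulation is also internally inverted. You write ``ratio bounded by $\Lambda$ gives $A(\alpha)\le\Lambda$'', but later ``ratio bounded by $M_H$, so $\Lambda=1/M_H$''.
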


For numerical orientation, the constants in Theorem~\ref{thm:first} satisfy
\[
\beta_H\approx0.728292486,
\qquad
\beta_K\approx0.572408378.
\]
These decimal approximations come from numerical solution of the defining equations $A(\beta_H)=1/M_H$ and $A(\beta_K)=9b_K/5$.
The rigorous lower bounds are the exact quantities $2+\beta_H$ and $3+\beta_K$ in \eqref{eq:firstbounds}; the numerical evaluations are not used in their proofs.
The inverse-function bounds are sufficient positivity certificates, and the exact degrees remain undetermined.
The fractional excess also satisfies a uniform asymptotic restriction.

\Needspace{12\baselineskip}
\begin{theorem}\label{thm:asymptotic}
Put
\[
\delta_{n,m}=\dcm(f_{n,m})-(2n+m-2),
\qquad
J=\int_0^{2\pi}\dfrac{1-\cos u}{u}\,du>0.
\]
Then
\begin{equation}\label{eq:asymptotic}
\limsup_{n\to\infty}(2n-1)
\sup_{m\in\mathbb Z_{\ge0}}\delta_{n,m}
\le\dfrac{2\pi^2}{J}.
\end{equation}
For each fixed $m\ge0$, one also has
\begin{equation}\label{eq:asymptoticlower}
\liminf_{n\to\infty}(2n-1)\delta_{n,m}\ge\dfrac3\pi.
\end{equation}
Consequently,
\begin{equation}\label{eq:exactscale}
\delta_{n,m}=\Theta(n^{-1})\quad\hbox{for each fixed }m,
\qquad
\sup_{m\ge0}\delta_{n,m}=\Theta(n^{-1}).
\end{equation}
\end{theorem}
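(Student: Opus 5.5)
The lower bound \eqref{eq:asymptoticlower} should follow directly from Theorem~\ref{thm:general}, so I would begin with that. Fix $m$ and let $n\to\infty$, so that $p=2n+m-1\to\infty$. Then $\tan(\pi/(p+1))\sim\pi/p$ is eventually smaller than $1/\sqrt p$, and hence $\lambda_p\sim\pi/(2p)$. Near $0$, the series \eqref{eq:Aintro} gives $A(\alpha)=\alpha\zeta(2)+O(\alpha^2)$, so with the continuity and monotonicity from Lemma~\ref{lem:A} we get $A^{-1}(\lambda)=(6/\pi^2)\lambda\,(1+o(1))$ as $\lambda\to0$. Therefore
\[
\beta_p\sim\frac{6}{\pi^2}\cdot\frac{\pi}{2p}=\frac{3}{\pi p}.
\]
Since $\delta_{n,m}\ge\beta_p$ and $(2n-1)/p\to1$ for fixed $m$, this yields \eqref{eq:asymptoticlower}. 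It also gives the lower halves of the $\Theta(n^{-1})$ statements in \eqref{eq:exactscale}, because $\sup_m\delta_{n,m}\ge\delta_{n,0}$.

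The upper bound \eqref{eq:asymptotic} is the real work. It must hold uniformly in $m$, so it cannot come from Theorem~\ref{thm:general}, which only gives $\delta_{n,m}<1$.

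\emph{Step 1: explicit kernel.} The partial-fraction expansion
\[
\frac1{e^t-1}-\frac1t+\frac12=\sum_{k\ge1}\frac{2t}{t^2+4\pi^2k^2}
\]
gives the Laplace density of $f_{n,m}$ in the form
\[
g(t)=2t^{p}\sum_{k\ge1}\frac{(2\pi k)^{-2n}}{t^2+4\pi^2k^2},\qquad p=2n+m-1 .
\]
The $k=1$ term dominates uniformly in $m$, with relative error $O(2^{-2n})$.

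\emph{Step 2: necessary condition.} By the fractional positivity criterion used in the proof of Theorem~\ref{thm:general}, $x^{d+\alpha}f_{n,m}\in\CM$ forces the Riemann--Liouville density
\[
\frac1{\Gamma(1-\alpha)}\int_0^t(t-s)^{-\alpha}g^{(p)}(s)\,ds
\]
to be nonnegative for all $t$.

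\emph{Step 3: where $g^{(p)}$ is negative.} Up to a positive factor, $g^{(p)}$ is the $p$-th derivative of $t^p/(t^2+4\pi^2)$. After the substitution $t=2\pi e^{u}$ (the logarithmic variable of the abstract), it oscillates with a negative lobe. I would compute this through the Mellin representation of $1/(1+t^2)$: the $p$-th derivative multiplies the Mellin symbol by a ratio of Gamma functions, whose phase rotates by about $\pi$ over a window of logarithmic length of order $1/p$.

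\emph{Step 4: extracting $\alpha$.} Because $(t-s)^{-\alpha}=1-\alpha\log(t-s)+O(\alpha^2)$, the positivity condition becomes
\[
\alpha\cdot\bigl(\text{a logarithmic moment of }g^{(p)}\bigr)\ \ge\ \bigl(\text{the negative mass of }g^{(p)}\bigr).
\]
Here the integral $\int_0^{2\pi}(1-\cos u)\,du/u=J$ appears when the logarithmic moment of one oscillation period is evaluated. Optimizing the test point $t$ should then give
\[
\alpha\le \frac{2\pi^2}{J\,p}\,(1+o(1))
\]
uniformly in $m$. Since $p\ge 2n-1$, this gives \eqref{eq:asymptotic}.

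I expect Steps~3--4 to be the main obstacle. The difficulty is the uniformity in $m$: both the negative lobe of $g^{(p)}$ and its logarithmic moment have to be controlled for all $p\ge2n-1$ at once, with error terms that are $o(1/p)$ and do not depend on $m$.

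The case $m\to\infty$ with $n$ fixed is not needed here. I would handle uniformity by showing that the ratio
\[
\frac{\text{negative mass}}{\text{logarithmic moment}}
\]
is monotone in $p$, or at least bounded by its $p\to\infty$ limit plus an error of order $O(p^{-2})$. The most delicate point is the comparison with the upper integer endpoint: this is the convexity obstruction already used in Theorem~\ref{thm:general}, which now has to be made quantitative.
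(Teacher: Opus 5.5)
Your proof of the lower bound \eqref{eq:asymptoticlower} is correct and is the paper's argument: $\lambda_p\sim\pi/(2p)$ and $A(\alpha)\sim\pi^2\alpha/6$, hence $p\beta_p\to3/\pi$, and $\delta_{n,m}\ge\beta_p$ by Theorem~\ref{thm:general}.

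The upper bound \eqref{eq:asymptotic}, however, is not proved. Steps~3--4 never establish the one estimate that is needed: a first-order expansion of the density in $\alpha$ at a suitable test point, with errors small against the main term uniformly in $m$. Several of the ingredients also point the wrong way.
\begin{itemize}
\item The inequality in Step~4 is misstated. At $\alpha=0$ the density is the \emph{net} mass $\int_0^t g^{(p)}=g^{(p-1)}(t)$, so positivity requires this net mass to dominate $\alpha$ times the logarithmic moment. What forces $\alpha=O(1/p)$ is that, for the $k=1$ kernel, the net mass is only $h_p(2\pi/p)\approx2\pi^2/p$ at the end of the first negative lobe. That lobe's negative mass, by contrast, is about $2$.
\item Step~3 mislocates the oscillation. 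By \eqref{eq:hpprime} the relevant lobes sit at $s\asymp1/p$, where a half-period has logarithmic length of order one. Windows of logarithmic length $1/p$ occur only for $s\asymp1$, where $h_p'$ is exponentially small.
\item The convexity obstruction at the upper endpoint plays no role here, since it only yields $\delta_{n,m}<1$.
\item Minor: $t^m r_n(t)=2t^{p+1}\sum_k a_k^{-2n}/(t^2+a_k^2)$, not $2t^p\sum_k\cdots$.
\end{itemize}

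The missing idea is to rescale by $p$. Put $g_p(u)=h_p(u/p)$. Then $|g_p'|\le1$, $\|g_p-(1-\cos u)\|_{C^1[0,2\pi]}=O(p^{-1})$, and $g_p(2\pi)=2\pi^2/p+O(p^{-2})$, with constants independent of $p$. The Marchaud form \eqref{eq:Marchaud} also controls the region $s\approx t$, which a pointwise expansion of $(t-s)^{-\alpha}$ does not. It gives
\[
\mathcal D^\alpha g_p(2\pi)=\dfrac{2\pi^2}{p}-\alpha J+O\Bigl(p^{-2}+\dfrac{\alpha}{p}+\alpha^2\Bigr),\qquad0\le\alpha\le\dfrac12,
\]
where $J=\int_0^1(1-\cos2\pi v)(1-v)^{-1}\,dv$ is the $\alpha=0$ value of the Marchaud integral for $1-\cos u$ at $u=2\pi$.

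By \eqref{eq:scaling}, $Q_{n,m,\alpha}(4\pi^2/p)$ is a positive multiple of $\sum_k k^{-2n-\alpha}\mathcal D^\alpha g_p(2\pi/k)$. The terms with $k\ge2$ are $O(4^{-n})$ in absolute value, by $|\mathcal D^\alpha g_p(s)|\le s^{1-\alpha}/\Gamma(2-\alpha)$. Relative dominance of the $k=1$ term in $g$ itself, as in your Step~1, does not survive the $p$ differentiations.

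Now take $\alpha=c/(2n-1)$ with $c>2\pi^2/J$, and use $2\pi^2/p\le2\pi^2/(2n-1)$. The sum is then at most $(2\pi^2-cJ)/(2n-1)+O(n^{-2})<0$ for every $m$ at once. Proposition~\ref{prop:density} and Lemma~\ref{lem:closed} give $\delta_{n,m}<c/(2n-1)$.

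Normalizing by $2n-1$ rather than $p$ is essential, and this is where your plan goes astray. For small $\alpha$, the $k=2$ term at this point is positive, about $2\cdot4^{-n}$. A first-order signal of size $1/p$ cannot beat it once $m$ is exponentially large in $n$. Your intermediate target $\alpha\le\frac{2\pi^2}{Jp}(1+o(1))$ uniformly in $m$ is thus more than this mechanism delivers. The theorem needs no monotonicity in $p$, no Mellin analysis, and no optimization of the test point.
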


Here $a_n=\Theta(n^{-1})$ means that $c/n\le a_n\le C/n$ for all sufficiently large $n$, for some constants $0<c\le C<\infty$.
The theorem determines the decay scale, but does not assert a limiting leading coefficient.
The upper bound is uniform over all derivative orders; the lower bound for an individual degree depends on their combined parameter $p=2n+m-1$.

\subsection{Contribution and proof strategy}

The rational Laplace representation, Bernstein's theorem, and the fractional-order interpretation of complete monotonicity are established tools; see \cite{KP2009}.
The signed-density characterization below is an application of classical uniqueness of Laplace transforms to this particular family.
The broader connection between derivative inequalities and positive integral representations includes the generalized Stieltjes characterizations of Sokal \cite{Sokal2010} and Koumandos and Pedersen \cite{KP2019}.
For generalized Stieltjes functions, monotonicity of asymptotic remainders can also characterize moment properties of the representing measures \cite{KP2015}.
Karp and Prilepkina \cite{KarpPrilepkina2012} developed criteria for exact Stieltjes order and fractional integral transformations of representing measures.
That order concerns generalized Stieltjes representations and is distinct from the weighted completely monotonic degree in \eqref{eq:degree}.

The quantitative step developed here is a fractional positivity criterion that uses the negative part of the kernel's derivative with respect to its logarithmic variable, only up to the point being tested.
For the common kernel, this local estimate yields an admissible fractional order comparable to $p^{-1}$.
It gives the simultaneous disproof of Conjectures~\ref{conj:remainder}--\ref{conj:higher}, extends the integer-degree formula to arbitrary derivative order, and supplies the lower half of the exact decay scale.
A separate scaled-density argument supplies the upper half, uniformly in all derivative orders.

Section~2 develops the positivity criteria and the elementary facts about admissible exponents.
Section~3 derives the common kernel and proves Theorem~\ref{thm:general}.
Section~4 sharpens the first two cases, and Section~5 establishes the asymptotic bounds, including derivative orders growing proportionally to the truncation order.
Section~6 states the remaining questions.

\section{Admissible exponents and fractional positivity}

\subsection{Admissible exponents}

\begin{lemma}\label{lem:closed}
If $f\in\CM$, the set $E_f=\{\rho\ge0:x^\rho f(x)\in\CM\}$ is downward closed and closed in $[0,\infty)$.
If $p\ge0$ and $p\notin E_f$, then $\sup E_f<p$.
\end{lemma}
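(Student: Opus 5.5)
Three properties must be established: downward closedness, closedness, and the strict inequality $\sup E_f<p$ when $p\notin E_f$. The third follows immediately from the first two, so the work lies in those.

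\emph{Downward closed.} Let $\rho\in E_f$ and $0\le\sigma<\rho$. Write
\[
x^\sigma f(x)=x^{\sigma-\rho}\cdot x^\rho f(x).
\]
The factor $x^{-(\rho-\sigma)}$ is completely monotonic, since its $j$-th derivative equals $(-1)^j(\rho-\sigma)(\rho-\sigma+1)\cdots(\rho-\sigma+j-1)x^{-(\rho-\sigma)-j}$. The class $\CM$ is closed under products: by the Leibniz rule, $(-1)^j(gh)^{(j)}=\sum_i\binom ji(-1)^ig^{(i)}(-1)^{j-i}h^{(j-i)}\ge0$. Hence $\sigma\in E_f$, so $E_f$ is an interval $[0,s)$ or $[0,s]$ (possibly all of $[0,\infty)$). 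Note that $0\in E_f$ because $f\in\CM$.

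\emph{Closed.} Let $\rho_k\in E_f$ with $\rho_k\to\rho$. For each fixed $x>0$ and $j$, the derivative $(x^{\rho}f(x))^{(j)}$ is given by the Leibniz formula as a finite sum
\[
\sum_i\binom ji\,(\rho)_{i}^{\downarrow}\,x^{\rho-i}f^{(j-i)}(x),
\]
where $(\rho)_i^{\downarrow}$ denotes the falling factorial. Each term is jointly continuous in $\rho$ for fixed $x$. Therefore
\[
(-1)^j(x^{\rho_k}f)^{(j)}(x)\to(-1)^j(x^{\rho}f)^{(j)}(x),
\]
and the limit of nonnegative quantities is nonnegative. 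Thus $\rho\in E_f$. By downward closedness it suffices to treat increasing sequences, but the argument above works for any convergent sequence. Combining the two properties, $E_f=[0,s]$ with $s=\sup E_f$ whenever $s<\infty$.

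\emph{The supremum bound.} Suppose $p\ge0$ and $p\notin E_f$. Downward closedness gives $E_f\subset[0,p)$, so $s\le p$. If $s=p$, then closedness gives $p\in E_f$, a contradiction. Hence $s<p$. This also excludes the case $E_f=[0,\infty)$.

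There is no real obstacle in this argument. The only point requiring care is the pointwise continuity in the exponent, which is elementary because $x^\rho$ is smooth in $(x,\rho)$ on $(0,\infty)\times\mathbb R$.
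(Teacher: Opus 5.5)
Your proposal is correct and follows essentially the same argument as the paper: downward closure via the factorization $x^\sigma f=x^{-(\rho-\sigma)}\,x^\rho f$ and Leibniz's formula, closedness from pointwise continuity in $\rho$ of each $(-1)^j(d/dx)^j[x^\rho f(x)]$, and the strict bound by combining the two. The only difference is that you spell out the falling-factorial expansion and the complete monotonicity of $x^{-a}$, which the paper leaves implicit.
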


\begin{proof}
If $0\le\sigma\le\rho\in E_f$, then
$x^\sigma f=x^{-(\rho-\sigma)}x^\rho f$ is completely monotonic by Leibniz's formula.
For each fixed $x>0$ and $j\ge0$, the quantity
$(-1)^j(d/dx)^j[x^\rho f(x)]$ depends continuously on $\rho$.
Each defining inequality is therefore closed in $\rho$, proving closedness of $E_f$.
Downward closure excludes all exponents at least $p$ when $p\notin E_f$; closedness excludes the possibility that its supremum equals $p$.
\end{proof}

\subsection{Fractional identities}

For a locally integrable function $h$ of at most polynomial growth, write
\[
\Lap h(x)=\int_0^\infty e^{-xt}h(t)\dd,
\qquad x>0.
\]
If $h\in C^1[0,\infty)$ and $h(0)=0$, define
\begin{equation}\label{eq:fractional}
\mathcal D^\alpha h(t)=\dfrac{1}{\Gamma(1-\alpha)}
\int_0^t(t-u)^{-\alpha}h'(u)\,du,
\qquad 0<\alpha<1.
\end{equation}
Under the condition $h(0)=0$, this is both the Caputo and the Riemann--Liouville derivative.

\begin{lemma}\label{lem:fractional}
Suppose that $h\in C^1[0,\infty)$, $h(0)=0$, and $h,h'$ have at most polynomial growth.
Then
\begin{align}
\Gamma(1-\alpha)t^\alpha\mathcal D^\alpha h(t)
&=h(t)+\alpha\int_0^1\dfrac{h(t)-h(tv)}{(1-v)^{1+\alpha}}\,dv,
\label{eq:Marchaud}\\
\Lap(\mathcal D^\alpha h)(x)&=x^\alpha\Lap h(x),
\label{eq:Lapfractional}\\
\mathcal D^\alpha[h(t/a)]
&=a^{-\alpha}(\mathcal D^\alpha h)(t/a),\qquad a>0.
\label{eq:scaling}
\end{align}
\end{lemma}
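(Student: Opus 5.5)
We prove the three identities separately, starting from the definition \eqref{eq:fractional}; throughout, $0<\alpha<1$ and $t>0$ are fixed.

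\textbf{Marchaud form \eqref{eq:Marchaud}.} Substitute $u=tv$ in \eqref{eq:fractional} to get
\[
\Gamma(1-\alpha)t^\alpha\mathcal D^\alpha h(t)=t\int_0^1(1-v)^{-\alpha}h'(tv)\,dv,
\]
and note that $t\,h'(tv)=-\frac{d}{dv}[h(t)-h(tv)]$. Integrate by parts on $[0,1-\varepsilon]$, differentiating $(1-v)^{-\alpha}$, whose derivative is $\alpha(1-v)^{-1-\alpha}$. The boundary terms behave as follows.
\begin{itemize}
\item At $v=0$ the term is $h(t)-h(0)=h(t)$.
\item At $v=1-\varepsilon$ the term is $\varepsilon^{-\alpha}[h(t)-h(t-t\varepsilon)]=O(\varepsilon^{1-\alpha})\to0$, because $h\in C^1$.
\end{itemize}
The remaining integral converges absolutely near $v=1$, since $|h(t)-h(tv)|\le C(1-v)$ there. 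Letting $\varepsilon\to0$ gives \eqref{eq:Marchaud}.

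\textbf{Laplace identity \eqref{eq:Lapfractional}.} Write $\Gamma(1-\alpha)\mathcal D^\alpha h=k_\alpha*h'$ as a convolution on $[0,\infty)$, with $k_\alpha(t)=t^{-\alpha}$. The convolution theorem gives $\Lap(k_\alpha*h')=\Lap k_\alpha\cdot\Lap h'$. It applies because $e^{-xt}|k_\alpha|*|h'|$ is integrable for every $x>0$: $h'$ has polynomial growth and $k_\alpha$ is locally integrable, so Tonelli's theorem justifies it. We then use
\[
\Lap k_\alpha(x)=\Gamma(1-\alpha)x^{\alpha-1},
\qquad
\Lap h'(x)=x\Lap h(x)-h(0)=x\Lap h(x).
\]
The second formula follows by integration by parts; the boundary term at infinity vanishes by polynomial growth. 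Multiplying the two and dividing by $\Gamma(1-\alpha)$ gives $x^\alpha\Lap h(x)$.

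\textbf{Scaling \eqref{eq:scaling}.} Put $g(t)=h(t/a)$, so $g'(u)=a^{-1}h'(u/a)$. Substituting $u=aw$ gives
\[
\int_0^t(t-u)^{-\alpha}a^{-1}h'(u/a)\,du=\int_0^{t/a}(t-aw)^{-\alpha}h'(w)\,dw=a^{-\alpha}\int_0^{t/a}(t/a-w)^{-\alpha}h'(w)\,dw,
\]
which is $a^{-\alpha}\Gamma(1-\alpha)(\mathcal D^\alpha h)(t/a)$.

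The only delicate point is the boundary term at $v=1$ in the Marchaud integration by parts, together with the absolute convergence of the resulting singular integral. Both are controlled by the $C^1$ hypothesis through the bound $|h(t)-h(tv)|\le \bigl(\max_{[0,t]}|h'|\bigr)\,t(1-v)$. The Fubini/Tonelli justification in the Laplace step is routine given the polynomial growth of $h'$.
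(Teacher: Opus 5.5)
Your proof is correct and takes essentially the same route as the paper. You use the substitution $u=tv$ and integrate by parts against $h(t)-h(tv)$ for \eqref{eq:Marchaud}, a Tonelli/Fubini convolution argument with $\Lap h'=x\Lap h$ for \eqref{eq:Lapfractional}, and a change of variable for \eqref{eq:scaling}. Your $\varepsilon$-truncation at $v=1$ simply spells out the boundary estimate $h(t)-h(tv)=O(1-v)$ that the paper invokes directly.
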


\begin{proof}
Substitute $u=tv$ in \eqref{eq:fractional} and integrate by parts using $h(t)-h(tv)$.
The boundary term at $v=1$ vanishes because $h(t)-h(tv)=O(1-v)$, and the term at $v=0$ equals $h(t)$.
This proves \eqref{eq:Marchaud}.

Absolute convergence allows Fubini's theorem, since
\[
\dfrac{1}{\Gamma(1-\alpha)}\int_0^\infty e^{-xt}
\int_0^t(t-u)^{-\alpha}|h'(u)|\,du\dd
=x^{\alpha-1}\int_0^\infty e^{-xu}|h'(u)|\,du<\infty.
\]
Thus $\Lap(\mathcal D^\alpha h)=x^{\alpha-1}\Lap(h')=x^\alpha\Lap h$, where the last step is integration by parts.
A change of variable in \eqref{eq:fractional} proves \eqref{eq:scaling}.
\end{proof}

\begin{lemma}\label{lem:A}
The function in \eqref{eq:Aintro} has the equivalent representations
\begin{equation}\label{eq:A}
A(\alpha)=\alpha\int_0^1\dfrac{-\log v}{(1-v)^{1+\alpha}}\,dv
=\int_0^1\dfrac{t^{-\alpha}-1}{1-t}\,dt.
\end{equation}
It is continuous and strictly increasing on $(0,1)$, with limits zero and infinity at the respective endpoints.
Moreover,
\begin{equation}\label{eq:Abound}
A(\alpha)<\dfrac{2\alpha}{1-\alpha},
\qquad
A(1/2)=2\log2,
\qquad
A(2/3)=\dfrac32\log3+\dfrac{\pi}{2\sqrt3}.
\end{equation}
\end{lemma}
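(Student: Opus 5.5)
Every claim in Lemma~\ref{lem:A} will be read off the series \eqref{eq:Aintro}, starting from the expansion $(1-v)^{-1-\alpha}=\sum_{j\ge0}\binom{j+\alpha}{j}v^j$ and the Beta-type integrals $\int_0^1 v^{j}(-\log v)\,dv=(j+1)^{-2}$ and $\int_0^1 t^{k-1-\alpha}\,dt=(k-\alpha)^{-1}$.

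\textbf{Second representation.} For $0<\alpha<1$, expand $(1-t)^{-1}=\sum_{k\ge1}t^{k-1}$ and integrate termwise. The integrand $t^{k-1}(t^{-\alpha}-1)$ is nonnegative, so monotone convergence applies. Each term gives
\[
\int_0^1 t^{k-1}(t^{-\alpha}-1)\,dt=\frac1{k-\alpha}-\frac1k=\frac{\alpha}{k(k-\alpha)},
\]
which is exactly the series \eqref{eq:Aintro}.

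\textbf{First representation.} The cleanest route is to integrate by parts in $\alpha\int_0^1(-\log v)(1-v)^{-1-\alpha}\,dv$. Note that $\frac{d}{dv}\bigl[(1-v)^{-\alpha}-1\bigr]=\alpha(1-v)^{-1-\alpha}$. With this antiderivative, the integral equals
\[
\bigl[(-\log v)\bigl((1-v)^{-\alpha}-1\bigr)\bigr]_0^1+\int_0^1\frac{(1-v)^{-\alpha}-1}{v}\,dv.
\]
Both boundary terms vanish. At $v\to0$ the product behaves like $\alpha v\log v\to0$. At $v\to1$ it is $O\bigl((1-v)^{1-\alpha}\bigr)\to0$. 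The substitution $t=1-v$ then turns the remaining integral into the second representation.

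\textbf{Monotonicity, continuity and limits.} Each term $\alpha/(k(k-\alpha))$ is continuous and strictly increasing in $\alpha$. On $[0,a]$ with $a<1$ it is dominated by $1/(k(k-a))$, and these bounds are summable, so the series converges uniformly there. Hence $A$ is continuous and strictly increasing, and $A(\alpha)\to0$ as $\alpha\to0$. As $\alpha\to1$, the $k=1$ term $\alpha/(1-\alpha)$ alone tends to infinity.

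\textbf{Upper bound.} Split off the $k=1$ term, $\alpha/(1-\alpha)$. For $k\ge2$ we have $k-\alpha\ge k-1$, so the tail is at most
\[
\alpha\sum_{k\ge2}\frac{1}{k(k-1)}=\alpha<\frac{\alpha}{1-\alpha}.
\]
The inequality is strict because $k-\alpha>k-1$. Together these give $A(\alpha)<2\alpha/(1-\alpha)$.

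\textbf{Special values.} We use $A(\alpha)=\psi(1-\alpha)+\gamma$, which follows from the series $\psi(1+z)+\gamma=\sum_k z/(k(k+z))$ with $z=-\alpha$. Alternatively it follows from the integral $\int_0^1\frac{1-t^{-\alpha}}{1-t}\,dt$ representation of $\psi$. The known values $\psi(1/2)=-\gamma-2\log2$ and $\psi(1/3)=-\gamma-\frac{\pi}{2\sqrt3}-\frac32\log3$ then give the stated values, since $A(1/2)=-\psi(1/2)-\gamma$ and $A(2/3)=-\psi(1/3)-\gamma$. The signs need care here. With $A(\alpha)=\gamma+\psi(1-\alpha)$ one would get negative values, so the correct identity must be $A(\alpha)=-\gamma-\psi(1-\alpha)$. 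This is consistent with $A>0$ and $\psi(1-\alpha)\to-\infty$ as $\alpha\to1$.

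To avoid citing $\psi$, the special values can instead be proved directly from the second representation. Substituting $t=s^2$ at $\alpha=1/2$ gives
\[
\int_0^1\frac{2(1-s)}{s(1-s^2)}\,ds\cdot s\;\to\;\int_0^1\frac{2}{1+s}\,ds=2\log2.
\]
At $\alpha=2/3$, substituting $t=s^3$ reduces the integral to $3\int_0^1\frac{1+s}{1+s+s^2}\,ds$, an elementary integral that yields $\frac32\log3+\frac{\pi}{2\sqrt3}$.

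The main obstacle is only bookkeeping: justifying the vanishing boundary terms and getting the sign in the digamma identification right. Everything else is routine.
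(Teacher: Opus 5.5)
Your proof is correct and takes essentially the same route as the paper. You integrate by parts against $(1-v)^{-\alpha}-1$ and substitute $t=1-v$, expand $(1-t)^{-1}$ termwise to get the series, use termwise monotonicity with locally uniform convergence, and use the substitutions $t=s^2$, $t=s^3$ for the special values; the differences are minor: you bound the tail by the telescoping sum $\sum_{k\ge2}1/(k(k-1))=1$ instead of $k-\alpha\ge k(1-\alpha)$ and $\sum k^{-2}<2$, and your digamma detour (whose sign you correctly fix to $A(\alpha)=-\gamma-\psi(1-\alpha)$) is not needed once you have the direct substitutions.
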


\begin{proof}
Both integrals in \eqref{eq:A} converge.
Integrating the first by parts against $(1-v)^{-\alpha}-1$, whose boundary terms vanish, and then putting $t=1-v$ gives the second.
Expanding $(1-t)^{-1}$ and using Tonelli's theorem yields
\[
A(\alpha)=\sum_{k=1}^\infty
\left(\dfrac{1}{k-\alpha}-\dfrac1k\right),
\]
which proves \eqref{eq:Aintro}.
The series converges locally uniformly in $\alpha$; each summand increases strictly, and the first tends to infinity as $\alpha\uparrow1$.
Furthermore,
\[
A(\alpha)\le\dfrac{\alpha}{1-\alpha}\sum_{k=1}^\infty k^{-2}
<\dfrac{2\alpha}{1-\alpha},
\]
using $\sum_{k=1}^\infty k^{-2}<1+\int_1^\infty u^{-2}\,du=2$.
This also gives the limit at zero.
The substitution $t=u^2$ proves $A(1/2)=2\log2$.
Similarly, $t=u^3$ gives
\[
A(2/3)=3\int_0^1\dfrac{1+u}{1+u+u^2}\,du
=\dfrac32\log3+\dfrac{\pi}{2\sqrt3}.
\]
\end{proof}

\subsection{Local and global slope criteria}

The quantity $u h'(u)$ is the derivative of $h$ with respect to the logarithmic variable:
\[
u h'(u)=\left.\dfrac{d}{dv}h(e^v)\right|_{v=\log u},\qquad u>0.
\]
The slope estimates below control its negative part, without division by $h(u)$.
Thus the relevant quantity is $-u h'(u)$, rather than the normalized logarithmic derivative $-u h'(u)/h(u)$.

\begin{proposition}\label{prop:localpositivity}
Under the hypotheses of Lemma~\ref{lem:fractional}, define
\[
M_h(t)=\max_{0\le u\le t}\bigl(-u h'(u)\bigr)_+,
\qquad a_+=\max\{a,0\}.
\]
Then, for $t>0$ and $0<\alpha<1$,
\begin{equation}\label{eq:localcriterion}
\Gamma(1-\alpha)t^\alpha\mathcal D^\alpha h(t)
\ge h(t)-A(\alpha)M_h(t).
\end{equation}
In particular, if $h'\ge0$ on $(0,T]$ and
$h(t)\ge\lambda M_h(t)$ for $t\ge T$, where $T,\lambda>0$, then
$\mathcal D^\alpha h\ge0$ and $x^\alpha\Lap h\in\CM$ whenever $A(\alpha)\le\lambda$.
\end{proposition}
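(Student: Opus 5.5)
The plan is to start from the Marchaud-type identity \eqref{eq:Marchaud} of Lemma~\ref{lem:fractional}:
\[
\Gamma(1-\alpha)t^\alpha\mathcal D^\alpha h(t)
=h(t)+\alpha\int_0^1\frac{h(t)-h(tv)}{(1-v)^{1+\alpha}}\,dv .
\]
The integral term is then bounded from below by $-A(\alpha)M_h(t)$.

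\textbf{Step 1: pointwise bound on the difference.} For $0<v\le1$, write the difference along the logarithmic variable:
\[
h(t)-h(tv)=\int_{tv}^{t}h'(u)\,du=\int_{tv}^{t}u h'(u)\,\frac{du}{u}.
\]
Every $u$ in the range of integration satisfies $u\le t$. Hence $uh'(u)\ge-(-uh'(u))_+\ge -M_h(t)$ there, and
\[
h(t)-h(tv)\ge -M_h(t)\int_{tv}^t\frac{du}{u}=M_h(t)\log v=-M_h(t)(-\log v).
\]
The maximum defining $M_h(t)$ is attained, since $u h'(u)$ is continuous on $[0,t]$ because $h\in C^1[0,\infty)$.

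\textbf{Step 2: integrate the bound.} Multiply the pointwise bound by the positive weight $\alpha(1-v)^{-1-\alpha}$ and integrate over $(0,1)$. By the first representation in \eqref{eq:A}, the integral term is at least $-A(\alpha)M_h(t)$, which gives \eqref{eq:localcriterion}. Convergence near $v=1$ causes no trouble: the original integral converges because $h(t)-h(tv)=O(1-v)$, and $-\log v=O(1-v)$ as well.

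\textbf{Step 3: the "in particular" part.} This requires a case split on $t$.
\begin{itemize}
\item For $0<t\le T$, use \eqref{eq:fractional} directly. The kernel $(t-u)^{-\alpha}$ is positive and $h'\ge0$ on $(0,t]$, so $\mathcal D^\alpha h(t)\ge0$. Alternatively, $M_h(t)=0$ in this range, and since $h(t)=\int_0^t h'\ge0$, \eqref{eq:localcriterion} gives the same conclusion.
\item For $t\ge T$, \eqref{eq:localcriterion} together with $A(\alpha)\le\lambda$ gives
\[
\Gamma(1-\alpha)t^\alpha\mathcal D^\alpha h(t)\ge h(t)-\lambda M_h(t)\ge0,
\]
where the last inequality uses $M_h(t)\ge0$ and the hypothesis $h(t)\ge\lambda M_h(t)$.
\end{itemize}
Thus $\mathcal D^\alpha h\ge0$ on $(0,\infty)$. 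By \eqref{eq:Lapfractional}, $x^\alpha\Lap h=\Lap(\mathcal D^\alpha h)$ is the Laplace transform of a nonnegative function of polynomial growth. Differentiating under the integral sign, justified by that growth bound, gives $(-1)^j(d/dx)^j\Lap(\mathcal D^\alpha h)=\Lap(t^j\mathcal D^\alpha h)\ge0$. Hence $x^\alpha\Lap h\in\CM$.

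\textbf{Anticipated obstacle.} The only delicate point is the use of the \emph{local} maximum. The estimate must use only values $u\le t$, and this holds precisely because the Marchaud form samples $h$ only on $[0,t]$. The remaining steps are routine.
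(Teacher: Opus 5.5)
Your proposal is correct and follows essentially the same argument as the paper. Both bound $h(t)-h(tv)\ge M_h(t)\log v$ by integrating in the logarithmic variable over $[tv,t]$, insert this into the Marchaud identity using the first integral representation of $A(\alpha)$, split into the cases $t\le T$ and $t\ge T$, and conclude complete monotonicity from $\Lap(\mathcal D^\alpha h)=x^\alpha\Lap h$.
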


\begin{proof}
For $0<v<1$, the definition of $M_h(t)$ gives
\[
h(t)-h(tv)=\int_{tv}^t h'(u)\,du
\ge-M_h(t)\int_{tv}^t\dfrac{du}{u}=M_h(t)\log v.
\]
Substitution into \eqref{eq:Marchaud} proves \eqref{eq:localcriterion}.
On $(0,T]$, nonnegativity follows directly from \eqref{eq:fractional}.
For $t\ge T$, it follows from \eqref{eq:localcriterion} and $A(\alpha)\le\lambda$.
Finally, \eqref{eq:Lapfractional} gives complete monotonicity by differentiation under the integral.
\end{proof}

\begin{corollary}\label{prop:positivity}
Under the hypotheses of Lemma~\ref{lem:fractional}, suppose that $T,b>0$ and $M\ge0$ satisfy
\begin{equation}\label{eq:criterion}
\begin{gathered}
h'(t)\ge0\quad(0<t\le T),\qquad
h(t)\ge b\quad(t\ge T),\\
t h'(t)\ge-M\quad(t>0).
\end{gathered}
\end{equation}
If $M A(\alpha)\le b$, then $\mathcal D^\alpha h\ge0$ and $x^\alpha\Lap h\in\CM$.
If $MA(\alpha_0)<b$, the same conclusion holds for every $\alpha$ in a sufficiently small neighborhood of $\alpha_0$ in $(0,1)$.
\end{corollary}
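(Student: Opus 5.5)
The corollary follows by reducing it to Proposition~\ref{prop:localpositivity} with $\lambda=b/M$. The case $M=0$ is easier and is treated at the end.

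Assume $M>0$. The hypothesis $th'(t)\ge-M$ for all $t>0$ gives $(-uh'(u))_+\le M$ for every $u>0$. Hence $M_h(t)\le M$ for all $t>0$. For $t\ge T$ we therefore get
\[
h(t)\ge b=\dfrac bM\,M\ge\dfrac bM\,M_h(t).
\]
Together with $h'\ge0$ on $(0,T]$, this gives the hypotheses of Proposition~\ref{prop:localpositivity} with $\lambda=b/M>0$. The condition $MA(\alpha)\le b$ is exactly $A(\alpha)\le\lambda$. The proposition then yields $\mathcal D^\alpha h\ge0$ and $x^\alpha\Lap h\in\CM$.

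If $M=0$, we have $h'\ge0$ on all of $(0,\infty)$. Then $\mathcal D^\alpha h\ge0$ follows directly from \eqref{eq:fractional}, and complete monotonicity follows from \eqref{eq:Lapfractional} as in the proof of the proposition. Alternatively, the inequality \eqref{eq:localcriterion} gives $\Gamma(1-\alpha)t^\alpha\mathcal D^\alpha h(t)\ge h(t)\ge b>0$ for $t\ge T$.

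For the neighbourhood statement, suppose $MA(\alpha_0)<b$. By Lemma~\ref{lem:A}, $A$ is continuous on $(0,1)$, so $MA(\alpha)<b$ for all $\alpha$ in some open interval around $\alpha_0$ inside $(0,1)$. For each such $\alpha$ the first part applies. In fact, since $A$ is increasing, the conclusion holds for all $\alpha\in(0,\alpha_0]$ together with a small interval to the right of $\alpha_0$.

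There is no real obstacle here. The only points needing care are that $M_h(t)$ is bounded by the global constant $M$, and the separate handling of $M=0$ so that $\lambda$ is well defined.
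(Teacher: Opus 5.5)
Your proof is correct and is essentially the paper's argument: both bound $M_h(t)\le M$ and feed this into Proposition~\ref{prop:localpositivity}, using $h'\ge0$ on $(0,T]$ and continuity of $A$ for the neighbourhood claim. The only difference is that the paper inserts $M_h(t)\le M$ directly into \eqref{eq:localcriterion} to get $\Gamma(1-\alpha)t^\alpha\mathcal D^\alpha h(t)\ge b-MA(\alpha)\ge0$ for $t\ge T$; this avoids dividing by $M$, so your separate $M=0$ case is not needed.
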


\begin{proof}
The global slope assumption gives $M_h(t)\le M$.
For $t\ge T$, Proposition~\ref{prop:localpositivity} yields
\begin{equation}\label{eq:lowerdensity}
\Gamma(1-\alpha)t^\alpha\mathcal D^\alpha h(t)
\ge b-MA(\alpha)\ge0.
\end{equation}
For $t\le T$, use $h'\ge0$ in \eqref{eq:fractional}.
Continuity of $A$ preserves a strict margin near $\alpha_0$.
\end{proof}

Only the negative part of $th'(t)$ enters the criterion.
This is useful when a kernel initially increases, then has small oscillations around a positive limit.

\section{The common kernel and noninteger degrees}

\subsection{The classical remainder representation}

For an integer $p\ge1$, put
\begin{equation}\label{eq:hp}
h_p(s)=1-\operatorname{Re}(1+is)^{-p},\qquad s\ge0.
\end{equation}
The function is smooth and bounded, vanishes at zero, and is strictly positive for $s>0$, since $|(1+is)^{-p}|<1$.
It also satisfies $h_p'(s)=O(s)$ at zero.

\begin{lemma}\label{lem:kernel}
Let $a_k=2\pi k$.
For every $n\ge1$,
\begin{equation}\label{eq:rn}
R_n=\Lap r_n,
\qquad
r_n(t)=2\sum_{k=1}^\infty\dfrac{t^{2n}}{a_k^{2n}(t^2+a_k^2)}.
\end{equation}
For all integers $m\ge0$, with $p=2n+m-1$ and $d=p-1$,
\begin{align}
(t^m r_n(t))^{(d)}
&=2d!\sum_{k=1}^\infty a_k^{-2n}h_p(t/a_k),\label{eq:kernelderiv}\\
x^d f_{n,m}(x)
&=2d!\sum_{k=1}^\infty a_k^{-2n}\Lap[h_p(t/a_k)](x).
\label{eq:kernelLap}
\end{align}
\end{lemma}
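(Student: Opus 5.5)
We start from the classical Binet-type representation
\[
\log\Gamma(x)-\left(x-\tfrac12\right)\log x+x-\tfrac12\log(2\pi)=\int_0^\infty e^{-xt}\left(\dfrac1{e^t-1}-\dfrac1t+\dfrac12\right)\dfrac{dt}{t},
\]
together with the partial-fraction expansion
\[
\dfrac1{e^t-1}-\dfrac1t+\dfrac12=2\sum_{k\ge1}\dfrac{t}{t^2+a_k^2}.
\]
Subtracting Bernoulli terms is done with the identities $\Lap[t^{2k-2}](x)=(2k-2)!\,x^{1-2k}$ and $B_{2k}=(-1)^{k+1}2(2k)!\sum_j a_j^{-2k}$. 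These show that the $k$-th subtracted term equals the Laplace transform of
\[
2(-1)^{k+1}\sum_j a_j^{-2k}t^{2k-2}.
\]
We then expand $2/(t^2+a_j^2)$ geometrically and use the finite identity
\[
\dfrac{1}{t^2+a^2}-\sum_{k=1}^{n}(-1)^{k+1}\dfrac{t^{2k-2}}{a^{2k}}=(-1)^n\dfrac{t^{2n}}{a^{2n}(t^2+a^2)}.
\]
This yields $R_n=\Lap r_n$ after multiplying by the sign $(-1)^n$. The interchange of sum and Laplace transform is justified termwise, since all terms of $r_n$ are nonnegative and $r_n(t)=O(t^{2n})$ near $0$ and $O(t^{2n-2})$ at infinity. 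Hence Tonelli's theorem applies.

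Next, $(-1)^mR_n^{(m)}=\Lap[t^mr_n]$ by differentiating under the integral. For \eqref{eq:kernelderiv} we work one summand at a time and write
\[
\dfrac{t^{2n+m}}{t^2+a^2}=\operatorname{Re}\dfrac{t^{2n+m}}{a(a-it)}\cdot(\text{suitable normalization}).
\]
The cleanest route is polynomial division: $t^{p+1}/(t^2+a^2)=P(t)+c\,\cdot$(a remainder of degree $<2$ over $t^2+a^2$), where $\deg P=p-1=d$. Then the $d$-th derivative kills all but the leading term of $P$ and the remainder. It is better to use partial fractions directly:
\[
\dfrac{1}{t^2+a^2}=\dfrac{1}{a}\operatorname{Im}\dfrac{1}{t-ia}\cdot(\ldots).
\]
Concretely, I would verify the scaled identity, for $a=1$,
\[
\left(\dfrac{s^{p+1}}{1+s^2}\right)^{(d)}=d!\,\bigl(1-\operatorname{Re}(1+is)^{-p}\bigr)+(\text{polynomial of degree }\le 1?)
\]
To check this, write
\[
\dfrac{s^{p+1}}{1+s^2}=\operatorname{Re}\left[\dfrac{s^{p}}{s-i}\right]\cdot s/s\ldots
\]
A more transparent way is the following. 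Since $h_p(0)=0$ and $h_p'(s)=O(s)$, the candidate $G(s)=d!\,h_p(s)$ should be compared via its derivative. We compute $(s^{p+1}/(1+s^2))^{(d+1)}$ using $s^{p+1}/(1+s^2)=\operatorname{Re}\bigl[s^{p+1}\cdot\tfrac{1}{1+is}\bigr]$ combined with Leibniz's rule or the formula $(1/(1+is))^{(j)}$. Alternatively, the $(d+1)$-st derivative of $s^{p+1}/(1+s^2)$ equals that of $\operatorname{Re}\bigl[(-i)^{\,p+1}\ldots/(s-i)\bigr]$, because subtracting a polynomial of degree $\le d$ does not affect it. This gives a multiple of $\operatorname{Re}[c(s\pm i)^{-(p+1)}]$, which matches $d!\,h_p'(s)=d!\,p\operatorname{Re}[i(1+is)^{-p-1}]$. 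Both sides vanish at $s=0$ (the left because $s^{p+1}/(1+s^2)$ has a zero of order $p+1>d$ at $0$), so integrating gives the identity. The constant must be checked so that the polynomial part of degree exactly $d$ contributes the ``$1$''. Scaling $s=t/a_k$ and collecting the prefactor $2a_k^{-2n}\cdot a_k^{\,p+1-2}/a_k^{\,d}\ldots$ should produce exactly $2d!\,a_k^{-2n}h_p(t/a_k)$. Termwise differentiation of the series is legitimate because the differentiated series converges locally uniformly, as $h_p$ is bounded and $\sum a_k^{-2n}<\infty$.

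Finally, \eqref{eq:kernelLap} follows from
\[
x^d\Lap g=\Lap[g^{(d)}]\qquad\text{whenever } g^{(j)}(0)=0 \text{ for } j<d,
\]
applied to $g=t^mr_n$, which vanishes to order $2n+m>d$ at $0$. The boundary terms at infinity vanish because $g$ has polynomial growth. Fubini's theorem then moves the sum out of the Laplace transform, using the positivity of $h_p$.

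The main obstacle is the exact algebraic identity \eqref{eq:kernelderiv}, namely getting the normalization and sign conventions of $\operatorname{Re}(1+is)^{-p}$ right. I would settle it by checking the derivative identity above, and sanity-check it at $p=1$, where $d=0$ and $s^2/(1+s^2)=1-\operatorname{Re}(1+is)^{-1}$ holds exactly.
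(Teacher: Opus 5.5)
Your proof is correct and has the same overall structure as the paper's. For \eqref{eq:rn} both use Binet's formula with the $\coth$ partial fractions and the finite geometric expansion. For \eqref{eq:kernelderiv} both use a closed form for $F_p^{(d)}$, where $F_p(s)=s^{p+1}/(1+s^2)$, followed by scaling $s=t/a_k$. For \eqref{eq:kernelLap} both use $d$ integrations by parts and Tonelli.

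The one real difference is how the identity $F_p^{(p-1)}=(p-1)!\,h_p$ is proved.
\begin{itemize}
\item The paper argues by induction on $p$, using $F_{p+1}=sF_p$, Leibniz's rule and the recursion $h_{p+1}=h_p+(s/p)h_p'$, so the normalization comes out automatically.
\item You remove the polynomial part, compare $p$-th derivatives, and use that both sides vanish at $s=0$. This is more direct and shows where the ``$1$'' in $h_p$ comes from.
\end{itemize}
However, the constant check is exactly the step you left as ``should match.'' It does close. Write $F_p(s)=\operatorname{Re}[s^{p+1}/(1+is)]$ and $s^{p+1}=Q(s)(1+is)+i^{p+1}$, where $Q$ has degree $p$ and leading coefficient $-i$. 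Then
\[
F_p^{(p)}(s)=\operatorname{Re}\bigl[i^{p+1}(-i)^p\,p!\,(1+is)^{-p-1}\bigr]=p!\operatorname{Re}\bigl[i(1+is)^{-p-1}\bigr]=(p-1)!\,h_p'(s).
\]
Together with $F_p^{(p-1)}(0)=0=h_p(0)$, this gives the identity. Differentiating only $d$ times in the same decomposition gives it directly: $\operatorname{Re}Q^{(d)}=d!$, and the pole term contributes $-d!\operatorname{Re}(1+is)^{-p}$.

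One small point in your last step needs tightening. The boundary terms at infinity involve $g^{(j)}$ for $j<d$, so you need polynomial growth of these derivatives, not just of $g$. As in the paper, this follows by repeated integration from two facts: $g^{(d)}$ is bounded, by \eqref{eq:kernelderiv} and $|h_p|\le2$, and $g^{(j)}(0)=0$.
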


\begin{proof}
Binet's formula and the partial-fraction expansion of $\coth$ give
\begin{equation}\label{eq:Binet}
R_0=\Lap r_0,
\qquad
r_0(t)=\dfrac1t\left(\dfrac1{e^t-1}-\dfrac1t+\dfrac12\right)
=2\sum_{k=1}^\infty\dfrac1{t^2+a_k^2};
\end{equation}
see \cite[Section~2, (3)--(4) and Theorem~2.1]{KP2009}.
The finite geometric expansion
\[
\dfrac1{t^2+a^2}
=\sum_{j=0}^{n-1}\dfrac{(-1)^j t^{2j}}{a^{2j+2}}
+\dfrac{(-1)^n t^{2n}}{a^{2n}(t^2+a^2)}
\]
proves \eqref{eq:rn} after Laplace transformation.
Indeed, comparison of \eqref{eq:Binet} with the Bernoulli generating function identifies the polynomial coefficients as $B_{2j+2}/(2j+2)!$.
Their Laplace transforms are precisely the terms subtracted in \eqref{eq:Rn}.
The same remainder representation appears in \cite[(3)--(4)]{KP2009}.

Set $F_p(s)=s^{p+1}/(1+s^2)$.
We claim that
\begin{equation}\label{eq:algebraic}
F_p^{(p-1)}(s)=(p-1)!h_p(s).
\end{equation}
For $p=1$, this follows directly from \eqref{eq:hp}.
The identities $F_{p+1}=sF_p$ and
\[
h_{p+1}(s)=h_p(s)+\dfrac{s}{p}h_p'(s)
\]
then prove \eqref{eq:algebraic} by induction and Leibniz's formula.
Since
\[
\dfrac{2t^{2n+m}}{a_k^{2n}(t^2+a_k^2)}
=2a_k^{m-2}F_p(t/a_k),
\]
$d$ differentiations give \eqref{eq:kernelderiv}.
All these differentiated series converge uniformly on compact intervals: each derivative of fixed order is bounded there by a constant times $a_k^{-2n-2}$.

Write $g(t)=t^m r_n(t)$.
The series at zero gives $g(t)=O(t^{d+2})$ and $g^{(j)}(0)=0$ for $j\le d$.
The right side of \eqref{eq:kernelderiv} is bounded globally because $|h_p|\le2$ and $\sum a_k^{-2n}<\infty$.
Repeated integration therefore gives $g^{(j)}(t)=O((1+t)^{d-j})$ for $0\le j\le d$.
These estimates justify differentiating $R_n=\Lap r_n$ to obtain $f_{n,m}=\Lap g$, and all boundary terms in $d$ integrations by parts vanish.
This proves \eqref{eq:kernelLap}; summation under the integral is justified by boundedness or Tonelli's theorem.
\end{proof}

\subsection{The sign of the Laplace density}

For later use, we need a density test valid even when the density changes sign.

\begin{proposition}\label{prop:density}
For $n\ge2$, $m\ge0$, and $0<\alpha<1$, define
\begin{equation}\label{eq:Q}
Q_{n,m,\alpha}(t)=2d!\sum_{k=1}^\infty
 a_k^{-2n-\alpha}(\mathcal D^\alpha h_p)(t/a_k).
\end{equation}
Then $Q_{n,m,\alpha}$ is continuous, has at most polynomial growth, and
\begin{equation}\label{eq:QLap}
x^{d+\alpha}f_{n,m}(x)=\Lap Q_{n,m,\alpha}(x).
\end{equation}
Moreover,
\begin{equation}\label{eq:iff}
x^{d+\alpha}f_{n,m}(x)\in\CM
\quad\Longleftrightarrow\quad
Q_{n,m,\alpha}(t)\ge0\ \hbox{for every }t>0.
\end{equation}
\end{proposition}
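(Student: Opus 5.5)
We first establish continuity and growth of $Q_{n,m,\alpha}$, then the Laplace identity \eqref{eq:QLap}, and finally the equivalence \eqref{eq:iff} via Bernstein's theorem and uniqueness of Laplace transforms.

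\emph{Regularity.} Since $h_p$ is smooth, bounded, $h_p(0)=0$, $h_p'(s)=O(s)$ at zero and $h_p'(s)=O(s^{-2})$ at infinity (from $h_p'(s)=-\operatorname{Re}[-ip(1+is)^{-p-1}]$), $h_p'$ is bounded and integrable on $[0,\infty)$. Then \eqref{eq:fractional} gives
\[
|\mathcal D^\alpha h_p(s)|\le \dfrac{1}{\Gamma(1-\alpha)}\Bigl(\|h_p'\|_\infty\int_0^{\min(s,1)}w^{-\alpha}\,dw+\|h_p'\|_{L^1}\Bigr)\le C_{p,\alpha},
\]
splitting the integral at $u=s-1$. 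So $\mathcal D^\alpha h_p$ is bounded and continuous; continuity follows by dominated convergence after writing the integral as $\int_0^s w^{-\alpha}h_p'(s-w)\,dw$. Because $\sum_k a_k^{-2n-\alpha}<\infty$, the series \eqref{eq:Q} converges uniformly on $[0,\infty)$. Hence $Q_{n,m,\alpha}$ is continuous and in fact bounded.

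\emph{Laplace identity.} Apply Lemma~\ref{lem:fractional} to $h(t)=h_p(t/a_k)$, which satisfies its hypotheses. By \eqref{eq:scaling} and \eqref{eq:Lapfractional},
\[
\Lap\bigl[a_k^{-\alpha}(\mathcal D^\alpha h_p)(t/a_k)\bigr](x)=x^\alpha\Lap[h_p(t/a_k)](x).
\]
Multiply by $2d!\,a_k^{-2n}$ and sum over $k$. Interchanging sum and Laplace transform is justified by the uniform bound $C_{p,\alpha}a_k^{-2n-\alpha}$ and Tonelli's theorem. Comparing with \eqref{eq:kernelLap} then yields \eqref{eq:QLap}.

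\emph{Equivalence.} If $Q_{n,m,\alpha}\ge0$, differentiating under the integral in \eqref{eq:QLap} gives $(-1)^j(\Lap Q)^{(j)}=\Lap[t^jQ]\ge0$, so the function is completely monotonic. Conversely, suppose $x^{d+\alpha}f_{n,m}\in\CM$. Bernstein's theorem gives a positive measure $\mu$ on $[0,\infty)$ with $x^{d+\alpha}f_{n,m}(x)=\int e^{-xt}\,d\mu(t)$ for $x>0$. Then $\mu$ and the signed measure $Q\,dt$ have equal Laplace transforms on $(0,\infty)$, and both transforms are finite there, since $Q$ is bounded. Uniqueness of Laplace transforms of signed measures with a common half-line of convergence then forces $\mu=Q\,dt$, so $Q\ge0$ almost everywhere, and by continuity $Q\ge0$ everywhere.

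The main care lies in this last step: uniqueness must be applied to a difference of two measures, $\mu-Q_+dt$ against $Q_-dt$, each with transform finite for $x>0$. We would cite the standard uniqueness theorem for Laplace--Stieltjes transforms (Widder), applying it after shifting $x\mapsto x+1$ so that all transforms converge absolutely.
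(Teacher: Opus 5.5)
Your proof is correct and follows the paper's route: termwise scaling and the fractional Laplace identity give \eqref{eq:QLap}, and Bernstein's theorem plus uniqueness of Laplace transforms gives the converse in \eqref{eq:iff}. The differences are minor: you get a global bound on $\mathcal D^\alpha h_p$ from the integrability of $h_p'$, where the paper uses the growing bound $|\mathcal D^\alpha h_p(s)|\le ps^{1-\alpha}/\Gamma(2-\alpha)$; and you cite Widder's uniqueness theorem, where the paper proves uniqueness directly by pushing $e^{-x_0t}(\mu-Q_{n,m,\alpha}\,dt)$ forward under $y=e^{-t}$ and using vanishing moments with Weierstrass approximation (your split should read $\mu+Q_-\,dt$ against $Q_+\,dt$, though applying uniqueness to the signed measure $\mu-Q\,dt$ directly already suffices).
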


\begin{proof}
Differentiation of \eqref{eq:hp} yields
\begin{equation}\label{eq:hpprime}
h_p'(s)=p(1+s^2)^{-(p+1)/2}
\sin\bigl((p+1)\arctan s\bigr),
\end{equation}
so $|h_p'|\le p$ and
\begin{equation}\label{eq:derivbound}
|\mathcal D^\alpha h_p(s)|\le\dfrac{p s^{1-\alpha}}{\Gamma(2-\alpha)}.
\end{equation}
Consequently, the series in \eqref{eq:Q} converges locally uniformly and is bounded in absolute value by a constant times $t^{1-\alpha}$, using $\sum a_k^{-2n-1}<\infty$.
Equations~\eqref{eq:Lapfractional}, \eqref{eq:scaling}, and \eqref{eq:kernelLap} now prove \eqref{eq:QLap}, with absolutely convergent interchanges.

A nonnegative density gives complete monotonicity.
Conversely, Bernstein's theorem represents a completely monotonic function as the Laplace transform of a nonnegative measure $\mu$ on $[0,\infty)$; see \cite[Section~1]{KP2009} and \cite{Sokal2010}.
Fix $x_0>0$.
The finite signed measure
\[
e^{-x_0t}\bigl(\mu(dt)-Q_{n,m,\alpha}(t)\,dt\bigr)
\]
has zero Laplace transform.
To see uniqueness directly, push this measure forward under $y=e^{-t}$ and extend it to $[0,1]$ by assigning zero mass to $0$.
All its nonnegative integer moments vanish, including the zeroth moment.
Uniform polynomial approximation on $[0,1]$ then shows that this signed measure is zero.
Hence $\mu(dt)=Q_{n,m,\alpha}(t)\,dt$, so $Q_{n,m,\alpha}\ge0$ almost everywhere and, by continuity, everywhere on $(0,\infty)$.
\end{proof}

\subsection{A universal fractional improvement}

\begin{lemma}\label{lem:universal}
For every integer $p\ge3$, put
\[
T_p=\tan\dfrac{\pi}{p+1},
\qquad
\lambda_p=\dfrac12\min\{T_p,p^{-1/2}\},
\qquad
\beta_p=A^{-1}(\lambda_p).
\]
Then $\mathcal D^\alpha h_p\ge0$ for every $0<\alpha\le\beta_p$.
Moreover,
\[
\beta_p>\dfrac1{2p+1}
>\dfrac2{(p+1)^2+1},
\qquad
\lim_{p\to\infty}p\beta_p=\dfrac3\pi.
\]
\end{lemma}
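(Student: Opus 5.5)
The plan is to apply the second part of Proposition~\ref{prop:localpositivity} to $h=h_p$ with $T=T_p$ and $\lambda=\lambda_p$. Since $A$ is increasing (Lemma~\ref{lem:A}), this yields $\mathcal D^\alpha h_p\ge0$ for every $\alpha$ with $A(\alpha)\le\lambda_p$, that is, for $0<\alpha\le\beta_p$.

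\textbf{Increase up to $T_p$.} By \eqref{eq:hpprime}, $h_p'(s)$ has the sign of $\sin((p+1)\arctan s)$. This is nonnegative as long as $\arctan s\le\pi/(p+1)$, i.e. for $0<s\le T_p$, so $h_p'\ge0$ on $(0,T_p]$.

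\textbf{Slope bound beyond $T_p$.} In the definition of $M_{h_p}(t)$ the points $u\le T_p$ contribute zero. For larger $u$, \eqref{eq:hpprime} gives
\[
M_{h_p}(t)\le p\max_{T_p\le u\le t}\phi(u),\qquad \phi(u)=u(1+u^2)^{-(p+1)/2}.
\]
Here $\phi$ increases on $[0,p^{-1/2}]$ and decreases afterwards.

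\textbf{Lower bound for $h_p$.} Since $\operatorname{Re}(1+it)^{-p}\le(1+t^2)^{-p/2}$, Bernoulli's inequality $(1+x)^{q}-1\ge qx$ gives
\[
h_p(t)\ge1-(1+t^2)^{-p/2}\ge\dfrac{p t^2}{2}(1+t^2)^{-p/2}.
\]
Note that the right side is increasing in $t$.

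\textbf{Case $T_p\le t\le p^{-1/2}$.} Here $M_{h_p}(t)\le p\,\phi(t)$. The bound above gives
\[
h_p(t)\ge\dfrac t2\cdot p\,t(1+t^2)^{-p/2}\ge\dfrac t2\,p\,\phi(t)\ge\dfrac t2M_{h_p}(t).
\]
Since $t\ge T_p$, we have $t/2\ge T_p/2\ge\lambda_p$, so $h_p(t)\ge\lambda_pM_{h_p}(t)$.

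\textbf{Case $t\ge p^{-1/2}$.} Here $M_{h_p}(t)\le p\,\phi(p^{-1/2})=\sqrt p\,(1+1/p)^{-(p+1)/2}$. Monotonicity of the lower bound evaluated at $p^{-1/2}$ gives
\[
h_p(t)\ge\tfrac12(1+1/p)^{-p/2}\ge\dfrac{1}{2\sqrt p}\,\sqrt p\,(1+1/p)^{-(p+1)/2}.
\]
Hence $h_p(t)\ge\tfrac12p^{-1/2}M_{h_p}(t)\ge\lambda_pM_{h_p}(t)$.

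If $T_p>p^{-1/2}$, the first case is empty and the argument still applies. In either case the hypotheses of Proposition~\ref{prop:localpositivity} hold, which proves $\mathcal D^\alpha h_p\ge0$ for $0<\alpha\le\beta_p$.

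\textbf{The inequalities for $\beta_p$.} The second inequality is elementary: $(p+1)^2+1>2(2p+1)$ is equivalent to $p^2>2p$, which holds for $p\ge3$.

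For the first inequality, by monotonicity of $A$ it suffices to show $A(1/(2p+1))<\lambda_p$. The proof of Lemma~\ref{lem:A} gives $A(\alpha)\le\frac{\alpha}{1-\alpha}\zeta(2)$, and with $\alpha=1/(2p+1)$ this equals $\zeta(2)/(2p)<5/(6p)$. Using $\tan x>x$,
\[
\tfrac12T_p>\dfrac{\pi}{2(p+1)}>\dfrac{5}{6p}\qquad(p\ge3).
\]
Also $\tfrac12p^{-1/2}\ge\tfrac{5}{6p}$ exactly when $\sqrt p\ge5/3$, which is true for $p\ge3$. Hence $\lambda_p>5/(6p)>A(1/(2p+1))$. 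The cruder bound $A(\alpha)<2\alpha/(1-\alpha)$ fails at $p=3$, so the $\zeta(2)$ version is needed there. This small-$p$ numerics is the only delicate point.

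\textbf{The limit of $p\beta_p$.} For large $p$ we have $T_p\sim\pi/p$, which is much smaller than $p^{-1/2}$, so $\lambda_p=T_p/2\sim\pi/(2p)\to0$. Consequently $\beta_p\to0$. The series \eqref{eq:Aintro} gives $A(\alpha)/\alpha\to\zeta(2)=\pi^2/6$ as $\alpha\to0$. Therefore
\[
p\beta_p\sim\dfrac{p\lambda_p}{\zeta(2)}\to\dfrac{\pi/2}{\pi^2/6}=\dfrac3\pi.
\]
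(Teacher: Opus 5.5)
You follow the paper's proof essentially step for step. The shared ingredients are the criterion from Proposition~\ref{prop:localpositivity} with $T=T_p$, the majorant $p\,u(1+u^2)^{-(p+1)/2}$ for $|uh_p'(u)|$ with its peak at $p^{-1/2}$, the Bernoulli lower bound for $h_p$, and the inversion argument for $p\beta_p\to3/\pi$. Using $A(\alpha)\le\zeta(2)\alpha/(1-\alpha)$ for every $p$ is a harmless streamlining of the paper's split into $p\ge4$ (crude bound) and $p=3$ (the $\zeta(2)$ bound). Your numerics there, and in the limit, are correct.

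There is one false step, in the case $t\ge p^{-1/2}$. You assert that $\tfrac{pt^2}{2}(1+t^2)^{-p/2}$ is increasing in $t$, and you use this to get $h_p(t)\ge\tfrac12(1+1/p)^{-p/2}$. With $y=t^2$, the derivative of $y(1+y)^{-p/2}$ has the sign of $1-(p/2-1)y$. So for $p\ge3$ this function increases only up to $t^2=2/(p-2)$ and tends to $0$ as $t\to\infty$. It therefore cannot bound $h_p$ below by its value at $p^{-1/2}$ on the whole half-line.

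The repair is the one the paper uses: the middle expression $1-(1+t^2)^{-p/2}$ is increasing. Hence, for $t\ge p^{-1/2}$,
\[
h_p(t)\ge1-(1+t^2)^{-p/2}\ge1-(1+1/p)^{-p/2}\ge\tfrac12(1+1/p)^{-p/2},
\]
where Bernoulli's inequality is applied only at the single point $t=p^{-1/2}$. With this correction the rest of your argument goes through unchanged.
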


\begin{proof}
Equation~\eqref{eq:hpprime} shows that $h_p'\ge0$ on $(0,T_p]$.
Set
\[
q_p(u)=pu(1+u^2)^{-(p+1)/2},
\qquad u_p=p^{-1/2}.
\]
Then $|u h_p'(u)|\le q_p(u)$, while
\[
q_p'(u)=p(1+u^2)^{-(p+3)/2}(1-pu^2).
\]
Thus $q_p$ increases up to $u_p$ and decreases thereafter.
For $0<t\le u_p$, it follows that $M_{h_p}(t)\le q_p(t)$.
Also,
\[
h_p(t)\ge1-(1+t^2)^{-p/2}
=q_p(t)\dfrac{\sqrt{1+t^2}\bigl((1+t^2)^{p/2}-1\bigr)}{pt}
\ge\dfrac t2q_p(t),
\]
where the last step is Bernoulli's inequality, since $p/2\ge1$.
Hence $h_p(t)\ge(t/2)M_{h_p}(t)$ in this range.
For $t\ge u_p$, the amplitude bound and the preceding estimate at $u_p$ give
\[
h_p(t)\ge1-(1+u_p^2)^{-p/2}
\ge\dfrac{u_p}{2}q_p(u_p)
\ge\dfrac{u_p}{2}M_{h_p}(t).
\]
Combining the two ranges shows that
$h_p(t)\ge\lambda_pM_{h_p}(t)$ whenever $t\ge T_p$.
The pointwise criterion~\eqref{eq:localcriterion} proves the positivity assertion.

For $p\ge4$, one has $T_p>\pi/(p+1)>2/p$ and $p^{-1/2}/2\ge1/p$, so $\lambda_p\ge1/p$.
By~\eqref{eq:Abound},
\[
A\left(\dfrac1{2p+1}\right)
<\dfrac{2/(2p+1)}{1-1/(2p+1)}
=\dfrac1p\le\lambda_p.
\]
For $p=3$, $\lambda_3=1/(2\sqrt3)$, and the defining series yields
\[
A(1/7)\le\dfrac16\sum_{k=1}^\infty k^{-2}
=\dfrac{\pi^2}{36}<\dfrac1{2\sqrt3},
\]
using $\pi^2<10<6\sqrt3$.
Strict monotonicity of $A$ proves $\beta_p>1/(2p+1)$.
The second strict inequality is equivalent to $p(p-2)>0$.

Finally, $p\lambda_p\to\pi/2$ and
\[
\lim_{\alpha\downarrow0}\dfrac{A(\alpha)}\alpha
=\sum_{k=1}^\infty k^{-2}=\dfrac{\pi^2}{6},
\]
by dominated convergence in the defining series.
Since $\lambda_p\to0$, inversion gives $p\beta_p\to3/\pi$.
\end{proof}

\subsection{The upper endpoint and proof of the main theorem}

\begin{proposition}\label{prop:upper}
For every $n\ge2$ and $m\ge0$, the function $x^p f_{n,m}(x)$ is not convex near zero.
In particular, $\dcm(f_{n,m})<p$.
\end{proposition}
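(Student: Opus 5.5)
The plan is to read off the behaviour of $x^pf_{n,m}(x)$ as $x\downarrow0$ directly from the definition \eqref{eq:Rn}, and not from the Laplace representation. Near zero, the Bernoulli sum in \eqref{eq:Rn} dominates $R_n$, since its top term is of order $x^{-(2n-1)}$. By contrast, $\log\Gamma(x)+\log x$ is analytic at $0$, and $(x-\tfrac12)\log x$ is only logarithmic. So I will write
\[
R_n(x)=(-1)^{n+1}\sum_{k=1}^n\dfrac{B_{2k}}{2k(2k-1)}x^{-(2k-1)}+L(x),
\]
where $L$ collects $\log\Gamma(x)-(x-\tfrac12)\log x+x-\tfrac12\log(2\pi)$ multiplied by $(-1)^n$.

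The key steps, in order, are as follows.

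First, I will fix signs using $(-1)^{k+1}B_{2k}>0$. The $k=n$ coefficient is then $c_n=|B_{2n}|/(2n(2n-1))>0$. The $k=n-1$ coefficient, which exists because $n\ge2$, is $(-1)^{n+1}B_{2n-2}/((2n-2)(2n-3))=-c_{n-1}<0$.

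Second, applying $(-1)^m(d/dx)^m$ sends $x^{-j}$ to $(j)_m x^{-j-m}$ with a positive Pochhammer factor, so the signs are preserved. Multiplying by $x^p$ with $p=2n-1+m$ then gives
\[
x^pf_{n,m}(x)=C_0-C_1x^2+\sum_{k\le n-2}(\cdots)x^{2(n-k)}+x^p(-1)^mL^{(m)}(x),
\]
with $C_0=c_n(2n-1)_m>0$ and $C_1=c_{n-1}(2n-3)_m>0$.

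Third, I will bound the logarithmic remainder. The function $L^{(m)}$ is $O(x^{-m})$ for $m\ge1$, and for $m=0$ it is $O(\log(1/x))$. Its derivatives behave likewise, each derivative costing one power of $x$ (the $\log\Gamma(x)+\log x$ part is analytic). Hence $x^p(-1)^mL^{(m)}(x)$ and its first two derivatives are $O(x^{2n-1}\log(1/x))$, $O(x^{2n-2}\log(1/x))$ and $O(x^{2n-3}\log(1/x))$. All of these tend to $0$ because $n\ge2$.

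Fourth, I will differentiate twice. This gives $(x^pf_{n,m})''(x)\to-2C_1<0$ as $x\downarrow0$, since the remaining power terms $x^{2(n-k)}$ with $k\le n-2$ have second derivatives $O(x^2)$. So $x^pf_{n,m}$ is strictly concave, and hence not convex, on some interval $(0,\varepsilon)$.

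A completely monotonic function has nonnegative second derivative, so $x^pf_{n,m}\notin\CM$, that is, $p\notin E_{f_{n,m}}$. Lemma~\ref{lem:closed} then yields $\dcm(f_{n,m})=\sup E_{f_{n,m}}<p$.

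The main obstacle is only bookkeeping. One must verify the sign of the $k=n-1$ Bernoulli term and that the logarithmic part cannot compete at second order; the latter is exactly where $n\ge2$ is used, since for $n=1$ there is no negative $x^2$ term. A cross-check via the Laplace representation is possible: near zero, $x^pf_{n,m}$ is governed by the large-$t$ behaviour of the density of $f_{n,m}$, namely a polynomial expansion of $t^mr_n(t)$ beginning with $t^{p-1}$ and then $-t^{p-3}$. That route yields the same two coefficients but is less direct.
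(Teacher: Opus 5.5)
Your proposal is correct and follows essentially the same route as the paper. The paper likewise writes $R_0(x)=-\tfrac12\log x-x\log x+B(x)$ with $B$ analytic near zero and derives the exact identity $x^pf_{n,m}=\sum_{k=1}^n(-1)^{n+k}c_k(2k-1)_mx^{2(n-k)}+(-1)^{n+m}x^pR_0^{(m)}(x)$. It then bounds the second derivative of the logarithmic part by $O(x^{2n-3}(1+|\log x|))$ via Leibniz's formula, obtains the limit $-2c_{n-1}(2n-3)_m<0$, and concludes with Lemma~\ref{lem:closed}, exactly as you do.
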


\begin{proof}
Put $c_k=|B_{2k}|/[2k(2k-1)]>0$, and use the rising factorial
$(a)_m=a(a+1)\cdots(a+m-1)$ with $(a)_0=1$.
The identity $\Gamma(1+x)=x\Gamma(x)$ gives
\[
C(x):=R_0(x)=-\dfrac12\log x-x\log x+B(x),
\qquad
B(x)=\log\Gamma(1+x)+x-\dfrac12\log(2\pi),
\]
where $B$ is analytic near zero.
The Bernoulli signs then give the exact identity
\begin{equation}\label{eq:origin}
x^p f_{n,m}(x)
=\sum_{k=1}^n(-1)^{n+k}c_k(2k-1)_m x^{2(n-k)}
+(-1)^{n+m}x^p C^{(m)}(x).
\end{equation}
For every fixed $r\ge0$, the explicit formula for $C$ gives
$C^{(r)}(x)=O(x^{-r}(1+|\log x|))$ as $x\downarrow0$.
Differentiating the last term of \eqref{eq:origin} twice by Leibniz's formula, rather than differentiating an error estimate, bounds each contribution by
$O(x^{2n-3}(1+|\log x|))$, which tends to zero.
Only $k=n-1$ in the finite sum contributes to the limiting second derivative.
Thus
\begin{equation}\label{eq:nonconvex}
\lim_{x\downarrow0}(x^p f_{n,m}(x))''
=-2c_{n-1}(2n-3)_m<0.
\end{equation}
Complete monotonicity implies convexity, so $p$ is not admissible.
Lemma~\ref{lem:closed} proves the strict degree bound.
\end{proof}

\begin{proof}[Proof of Theorem~\ref{thm:general}]
Lemma~\ref{lem:universal} gives $\mathcal D^{\beta_p}h_p\ge0$ and $\beta_p>1/(2p+1)$.
Every summand of $Q_{n,m,\beta_p}$ is therefore nonnegative.
Proposition~\ref{prop:density} yields
\[
\dcm(f_{n,m})\ge d+\beta_p>d+\dfrac1{2p+1}.
\]
Proposition~\ref{prop:upper} proves the upper inequality.
Since $d$ and $d+1$ are consecutive integers, the largest admissible integer is $d=2n+m-2$.
\end{proof}

\section{Stronger bounds for the first two degrees}

For reference, the two functions in Theorem~\ref{thm:first} are
\begin{align}
R_2(x)
&=\log\Gamma(x)-\left(x-\dfrac12\right)\log x+x
-\dfrac12\log(2\pi)-\dfrac1{12x}+\dfrac1{360x^3},\label{eq:R2}\\
-R_2'(x)
&=\log x-\psi(x)-\dfrac1{2x}-\dfrac1{12x^2}+\dfrac1{120x^4},
\label{eq:dR2}
\end{align}
where $\psi=\Gamma'/\Gamma$.
The associated kernels are
\begin{align}
H(s):=h_3(s)&=1+\dfrac{3s^2-1}{(1+s^2)^3},\label{eq:H}\\
K(s):=h_4(s)&=1-\dfrac{s^4-6s^2+1}{(1+s^2)^4}.
\label{eq:K}
\end{align}
Both are positive for $s>0$ and satisfy the regularity hypotheses of Corollary~\ref{prop:positivity}.

\begin{lemma}\label{lem:Hbounds}
The function $H$ satisfies \eqref{eq:criterion} with
$(T,b,M)=(1,1,M_H)$.
Moreover, $M_H$ is the least possible constant in the global estimate
$sH'(s)\ge-M_H$.
\end{lemma}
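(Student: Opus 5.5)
The plan is to verify the three conditions in \eqref{eq:criterion} from an explicit formula for $H'$, and then to compute the global minimum of $sH'(s)$ exactly. First, differentiating \eqref{eq:H} directly gives
\[
H'(s)=\dfrac{6s}{(1+s^2)^3}-\dfrac{6s(3s^2-1)}{(1+s^2)^4}
=\dfrac{12s(1-s^2)}{(1+s^2)^4}.
\]
This agrees with \eqref{eq:hpprime} for $p=3$, since $\sin(4\arctan s)\ge0$ exactly when $s\le1$. Hence $H'\ge0$ on $(0,1]$, which is the first condition with $T=1$.

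For $s\ge1$ we have $3s^2-1>0$, so \eqref{eq:H} gives $H(s)\ge1$ immediately. This is the second condition with $b=1$. The regularity hypotheses of Lemma~\ref{lem:fractional} were already noted after \eqref{eq:K}: $H$ is smooth, bounded, $H(0)=0$, and $H'$ is bounded.

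The main step is the slope bound. Since $sH'(s)\ge0$ on $(0,1]$, only $s>1$ matters. Put $w=s^2$ and
\[
g(w)=\dfrac{12w(w-1)}{(1+w)^4},\qquad w>1,
\]
so that $-sH'(s)=g(s^2)$. We need $\sup_{w>1}g(w)=M_H$, with the supremum attained. The function $g$ vanishes at $w=1$ and tends to $0$ as $w\to\infty$, so it has an interior maximum. The logarithmic derivative of $g$ has numerator proportional to
\[
(2w-1)(1+w)-4w(w-1)=-(2w^2-5w+1),
\]
so the unique critical point in $(1,\infty)$ is $w_*=(5+\sqrt{17})/4$. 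At this point
\[
w_*-1=\dfrac{1+\sqrt{17}}4,\qquad 1+w_*=\dfrac{9+\sqrt{17}}4,
\]
which give
\[
w_*(w_*-1)=\dfrac{22+6\sqrt{17}}{16},\qquad (1+w_*)^4=\dfrac{15112+3528\sqrt{17}}{256}.
\]
Therefore
\[
g(w_*)=\dfrac{24(22+6\sqrt{17})}{1889+441\sqrt{17}}.
\]
To see that this equals $M_H$, cross-multiply: $(22+6\sqrt{17})(107+51\sqrt{17})=7556+1764\sqrt{17}=4(1889+441\sqrt{17})$, so $g(w_*)=96/(107+51\sqrt{17})=M_H$.

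This yields $sH'(s)\ge-M_H$ for all $s>0$. Equality holds at $s=\sqrt{w_*}$, so no smaller constant works, which is the minimality claim. I expect the only delicate point to be this exact radical arithmetic. The qualitative shape of $g$ (zero at $w=1$, a single critical point, decay at infinity) is routine.
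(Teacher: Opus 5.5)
Your proof is correct and follows the paper's argument essentially step for step: the same formula for $H'$, the substitution $w=s^2$, the unique critical point $(5+\sqrt{17})/4$ of $12w(w-1)/(1+w)^4$ on $(1,\infty)$, and attainment of the maximum to get minimality of $M_H$. Your explicit radical computation showing that the critical value equals $96/(107+51\sqrt{17})$ fills in arithmetic that the paper leaves unstated.
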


\begin{proof}
Direct differentiation gives
\[
H'(s)=\dfrac{12s(1-s^2)}{(1+s^2)^4}.
\]
Thus $H'\ge0$ on $(0,1]$, while \eqref{eq:H} gives $H(s)\ge1$ for $s\ge1$.
For $y=s^2\ge1$, put
\[
q(y)=\dfrac{12y(y-1)}{(1+y)^4}.
\]
Then $-sH'(s)=q(y)$ and
\[
q'(y)=\dfrac{12(-2y^2+5y-1)}{(1+y)^5}.
\]
The only zero of $q'$ in $[1,\infty)$ is $r_H=(5+\sqrt{17})/4$.
The function $q$ increases up to $r_H$ and decreases thereafter, with
$q(1)=0$ and $q(y)\to0$ as $y\to\infty$.
Consequently its maximum is $q(r_H)=M_H$, attained when $s=\sqrt{r_H}$.
For $s\le1$, $sH'(s)$ is nonnegative.
\end{proof}

\begin{lemma}\label{lem:Kbounds}
The function $K$ satisfies \eqref{eq:criterion} with
\[
(T,b,M)=\left(\dfrac12,b_K,\dfrac59\right).
\]
\end{lemma}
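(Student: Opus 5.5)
The plan is to pass to the angular variable $\theta=\arctan s\in[0,\pi/2)$. Since $(1+is)^{-1}=\cos\theta\,e^{-i\theta}$, definition \eqref{eq:hp} gives
\[
K(s)=1-\cos^4\theta\,\cos4\theta .
\]
Equation \eqref{eq:hpprime} with $p=4$ gives
\[
K'(s)=4\cos^5\theta\,\sin5\theta,
\qquad
sK'(s)=4\sin\theta\cos^4\theta\,\sin5\theta .
\]
Each of the three conditions in \eqref{eq:criterion} then becomes a trigonometric inequality in $\theta$. I will check them in order, taking $T=1/2$, $b=b_K$ and $M=5/9$.

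\emph{First condition.} This is immediate. The derivative $K'\ge0$ holds exactly while $\sin5\theta\ge0$, that is, for $s\le\tan(\pi/5)$. Since $\tan(\pi/5)>\tan(\arctan\tfrac12)$, equivalently $\pi/5>\arctan(1/2)$, we get $K'\ge0$ on $(0,1/2]$.

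\emph{Second condition.} Put $g(\theta)=\cos^4\theta\cos4\theta$ and $\theta_0=\arctan(1/2)$. Then
\[
g'(\theta)=-4\cos^3\theta\,\sin5\theta .
\]
So on $[\theta_0,\pi/2)$ the function $g$ decreases up to $\pi/5$, increases on $[\pi/5,2\pi/5]$, and decreases afterwards to $0$. Its maximum over $[\theta_0,\pi/2)$ is therefore
\[
\max\{g(\theta_0),\,g(2\pi/5)\}.
\]
From $\cos^2\theta_0=4/5$, $\cos2\theta_0=3/5$ and $\cos4\theta_0=-7/25$, we get $g(\theta_0)<0$. Since $\cos(8\pi/5)=\cos(2\pi/5)$, we get $g(2\pi/5)=\cos^5(2\pi/5)>0$. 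Hence $K(s)\ge1-\cos^5(2\pi/5)=b_K$ for $s\ge1/2$, with equality at $s=\tan(2\pi/5)$.

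\emph{Third condition: the main obstacle.} We need $sK'(s)\ge-5/9$, and negativity occurs only when $\sin5\theta<0$ with $\theta\in(\pi/5,2\pi/5)$. The crude bound $4\max\sin\theta\cos^4\theta=64/(25\sqrt5)$ exceeds $5/9$. So is the bound obtained at $\theta=\pi/5$ alone, since $\sin\theta\cos^4\theta$ decreases for $\theta>\theta_0$. Hence the factor $|\sin5\theta|$ must be used jointly with the decreasing factor.

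A numerical inspection suggests that the true maximum of $-sK'$ is about $0.52$, attained near $\theta\approx0.82$. This leaves a margin of roughly $7\%$ below $5/9\approx0.556$.

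My first attempt is a finite partition of $[\pi/5,2\pi/5]$ into a few subintervals $[\theta_{j},\theta_{j+1}]$ with simple endpoints. On each subinterval I bound:
\begin{itemize}
\item $\sin\theta\cos^4\theta$ by its value at the left endpoint, using that it is decreasing there;
\item $|\sin5\theta|$ by its value at the endpoint nearer $3\pi/10$, or by $1$ on the subinterval containing $3\pi/10$.
\end{itemize}
Near $\pi/5$ I would instead use $|\sin5\theta|\le5(\theta-\pi/5)$, which keeps the product small there.

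The number of pieces is chosen so that each product is at most $5/36$, giving $-sK'\le5/9$ throughout. If exact trigonometric values become awkward, I would use the rational parametrization $s=\tan\theta$ directly. In that form
\[
-sK'(s)=\frac{4s^2(5s^4-10s^2+1)}{(1+s^2)^5}.
\]
Proving this is at most $5/9$ reduces to the polynomial inequality
\[
5(1+y)^5-36y(5y^2-10y+1)\ge0\qquad(y=s^2\ge0),
\]
which I would verify by writing it as a sum of nonnegative terms or by a short interval check. This degree-five polynomial check is the step I expect to need the most care. With all three conditions established, \eqref{eq:criterion} holds with $(T,b,M)=(1/2,b_K,5/9)$.
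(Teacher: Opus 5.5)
Your first two conditions are correct and are essentially the paper's argument written in the angle $\theta$. The gap is in the slope bound $sK'(s)\ge-5/9$, which you never actually prove.

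\emph{The partition route is only a plan.} The margin is about 7\%. Near $\theta\approx0.82$ the function $\log(\sin\theta\cos^4\theta)$ has slope $\cot\theta-4\tan\theta\approx-3.35$. So your endpoint-product bound needs subintervals of width roughly $0.02$ there. Each one requires rigorous numerical bounds on trigonometric values at non-special angles, and none of this is supplied.

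\emph{The algebraic fallback contains an error.} Since $\sin5\theta=\cos^5\theta\,s(s^4-10s^2+5)$, one has, with $y=s^2$,
\[
-sK'(s)=\frac{4y(10y-5-y^2)}{(1+y)^5}.
\]
Your $4y(5y^2-10y+1)/(1+y)^5$ uses the polynomial that belongs to $\cos5\theta$ instead. Two checks expose it. Your expression is positive near $s=0$, which contradicts your own first condition. At $\theta\approx0.82$ it equals about $-0.39$, whereas the true value there is about $0.52$. Worse, your target inequality
\[
5(1+y)^5-36y(5y^2-10y+1)=5y^5+25y^4-130y^3+410y^2-11y+5\ge0
\]
holds for every $y\ge0$. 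Verifying it would appear to close the argument while saying nothing about $K$.

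The correct reduction is
\[
5(1+y)^5+36y(y^2-10y+5)=5y^5+25y^4+86y^3-310y^2+205y+5\ge0 .
\]
The true maximum of $-sK'$ lies within about 7\% of $5/9$, so this quintic needs an explicit certificate. The paper supplies one in two ranges:
\begin{itemize}
\item For $y\ge1$, with $z=y-1\ge0$, the left side equals $5z^5+50z^4+236z^3+132z^2+16(z-1)^2$.
\item For $0\le y\le1$, it proves the stronger bound $sK'\ge-1/2$ from
\[
(1+y)^5+8y(y^2-10y+5)=(1-y)(1+46y-24y^2-6y^3-y^4)\ge(1-y)(1+15y)\ge0 .
\]
\end{itemize}
With the corrected formula and this certificate in place of your third step, your outline becomes a complete proof.
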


\begin{proof}
Put $y=s^2$.
Then
\begin{equation}\label{eq:Kprime}
K'(s)=\dfrac{4s(y^2-10y+5)}{(1+y)^5}.
\end{equation}
For $0<s\le1/2$, the polynomial in the numerator is at least $5-10/4>0$.
Put $\theta=\arctan s$.
The derivative also equals $4(1+s^2)^{-5/2}\sin(5\theta)$, so the positive critical points are
$s_1=\tan(\pi/5)$ and $s_2=\tan(2\pi/5)$.
The function increases up to $s_1$, decreases from $s_1$ to $s_2$, and then increases to its limit $1$.
Since $s_1>1/2$ and $K(1/2)>1$, its minimum on $[1/2,\infty)$ is
\[
K(s_2)=1-\cos^4(2\pi/5)\cos(8\pi/5)
=1-\cos^5(2\pi/5)=b_K.
\]

For the slope bound on $0\le y\le1$, use
\begin{align*}
&(1+y)^5+8y(y^2-10y+5)\\
&\qquad=(1-y)(1+46y-24y^2-6y^3-y^4)\\
&\qquad\ge(1-y)(1+15y)\ge0.
\end{align*}
Together with \eqref{eq:Kprime}, this gives $sK'(s)\ge-1/2$.
For $y\ge1$, set $z=y-1\ge0$ and use the exact identity
\begin{align}
&5(1+y)^5+36y(y^2-10y+5)\nonumber\\
&\qquad=5z^5+50z^4+236z^3+132z^2+16(z-1)^2\ge0.
\label{eq:Kcertificate}
\end{align}
It follows that $sK'(s)\ge-5/9$ on this range.
The global one-sided estimate is therefore $sK'(s)\ge-5/9$.
\end{proof}

\begin{proof}[Proof of Theorem~\ref{thm:first}]
By Lemma~\ref{lem:A}, there are unique $\beta_H,\beta_K\in(0,1)$ with
$A(\beta_H)=1/M_H$ and $A(\beta_K)=9b_K/5$.
The preceding two lemmas and Corollary~\ref{prop:positivity} give
\[
\mathcal D^\alpha H\ge0\quad(0<\alpha\le\beta_H),
\qquad
\mathcal D^\alpha K\ge0\quad(0<\alpha\le\beta_K).
\]
In these respective ranges, Proposition~\ref{prop:density} gives the nonnegative densities
\begin{align}
Q_{2,0,\alpha}(t)
&=4\sum_{k=1}^\infty a_k^{-4-\alpha}(\mathcal D^\alpha H)(t/a_k),
\label{eq:QH}\\
Q_{2,1,\alpha}(t)
&=12\sum_{k=1}^\infty a_k^{-4-\alpha}(\mathcal D^\alpha K)(t/a_k).
\label{eq:QK}
\end{align}
Their Laplace transforms are $x^{2+\alpha}R_2(x)$ and $x^{3+\alpha}[-R_2'(x)]$, respectively.
Taking $\alpha=\beta_H$ and $\alpha=\beta_K$ proves the lower bounds in \eqref{eq:firstbounds}.
The upper bounds follow from Theorem~\ref{thm:general}.

For the explicit rational bounds, the integral from Lemma~\ref{lem:A} gives
\[
A(2/3)=3\int_0^1\dfrac{1+u}{1+u+u^2}\,du<3,
\]
since the integrand is less than one for $u>0$.
Since $1/M_H=(107+51\sqrt{17})/96>3$, we obtain $\beta_H>2/3$.
For $k\ge2$, we have $k-5/9\ge13k/18$.
Consequently,
\[
A(5/9)\le\dfrac54+\dfrac{10}{13}
\left(\sum_{k=1}^\infty k^{-2}-1\right)
<\dfrac54+\dfrac{10}{13}\cdot\dfrac{13}{20}
=\dfrac74.
\]
Here $\sum k^{-2}=\pi^2/6<33/20$, for example from $\pi<22/7$.
Also, $\cos(2\pi/5)=(\sqrt5-1)/4<1/3$, so $b_K>1-1/243>35/36$.
Hence $9b_K/5>7/4>A(5/9)$, which proves $\beta_K>5/9$ and completes \eqref{eq:rationalfirst}.
\end{proof}

\section{The order of the fractional excess}

The upper endpoint obstruction in Proposition~\ref{prop:upper} is local in the original variable $x$.
A complementary obstruction comes from the fractional density at a shrinking positive point.
After rescaling, the kernels converge to $1-\cos u$, whose first return to zero forces a negative fractional derivative.
The following argument quantifies this obstruction uniformly in $m$.

\begin{proof}[Proof of Theorem~\ref{thm:asymptotic}]

Write $L=2\pi$, $N=2n-1$, $p=N+m$, and
\[
 g_p(u)=h_p(u/p),\qquad g(u)=1-\cos u.
\]
Directly from the derivative formula,
\[
 g_p'(u)=\left(1+\dfrac{u^2}{p^2}\right)^{-(p+1)/2}
             \sin\left((p+1)\arctan\dfrac{u}{p}\right),
 \qquad |g_p'(u)|\le1.
\]
Taylor expansion, uniformly on $[0,L]$, gives
\begin{equation}
 \|g_p-g\|_{C^1[0,L]}=O(p^{-1}),\qquad
 g_p(L)=\dfrac{L^2}{2p}+O(p^{-2}).                 \label{eq:C1limit}
\end{equation}
For example, the second formula follows from
$(1+iL/p)^{-p}=\exp(-iL-L^2/(2p)+O(p^{-2}))$ and $e^{-iL}=1$.

We first record the small-order expansion, uniform for large $p$ and
$0\le\alpha\le1/2$:
\begin{equation}
\mathcal D^\alpha g_p(L)
 =\dfrac{L^2}{2p}-\alpha J
       +O\left(p^{-2}+\dfrac{\alpha}{p}+\alpha^2\right).       \label{eq:smallalpha}
\end{equation}
Here $\mathcal D^0g_p=g_p$.
The implied constant in \eqref{eq:smallalpha} depends only on the fixed interval $[0,L]$, uniformly for large $p$ and $\alpha\in[0,1/2]$; in particular, it is independent of $m$.
Indeed, the fractional integration-by-parts formula is
\[
 \mathcal D^\alpha g_p(L)
 =\dfrac{L^{-\alpha}}{\Gamma(1-\alpha)}
  \left[g_p(L)+\alpha
   \int_0^1\dfrac{g_p(L)-g_p(Lv)}{(1-v)^{1+\alpha}}\,dv\right].
\]
Replacing $g_p$ by $g$ inside the integral incurs $O(p^{-1})$, uniformly
for $\alpha\le1/2$, since the difference of its two numerator values is
bounded by $L(1-v)\|(g_p-g)'\|_\infty$.
For $g$, replacing $\alpha$ by zero incurs $O(\alpha)$: use
$|g(L)-g(Lv)|\le L(1-v)$ and
\[
 |(1-v)^{-\alpha}-1|
 \le\alpha|\log(1-v)|(1-v)^{-1/2}.
\]
The integral at $\alpha=0$ is
\[
 -\int_0^1\dfrac{1-\cos(Lv)}{1-v}\,dv
 =-\int_0^L\dfrac{1-\cos u}{u}\,du=-J.
\]
Together with \eqref{eq:C1limit} and $L^{-\alpha}/\Gamma(1-\alpha)=1+O(\alpha)$,
these estimates prove \eqref{eq:smallalpha}.

Evaluate the density at $t_p=2\pi L/p=4\pi^2/p$.
Fractional scaling yields
\begin{equation}
\dfrac{(2\pi)^{2n+\alpha}}{2d!p^\alpha}
       Q_{n,m,\alpha}(t_p)
 =\sum_{k=1}^{\infty}k^{-2n-\alpha}
                         \mathcal D^\alpha g_p(L/k).                 \label{eq:scaledQ}
\end{equation}
The bound $|g_p'|\le1$ gives
\begin{equation}
\left|\sum_{k=2}^{\infty}k^{-2n-\alpha}\mathcal D^\alpha g_p(L/k)\right|
 \le\dfrac{L^{1-\alpha}}{\Gamma(2-\alpha)}
                \sum_{k=2}^{\infty}k^{-2n-1}
 =O(4^{-n}),                                                \label{eq:tail}
\end{equation}
uniformly in $p$ and $\alpha\in[0,1/2]$.
More explicitly, the integral test gives
\[
\sum_{k=2}^{\infty}k^{-2n-1}
\le2^{-2n-1}+\int_2^\infty u^{-2n-1}\,du
=4^{-n}\left(\dfrac12+\dfrac1{2n}\right),
\]
and $L^{1-\alpha}/\Gamma(2-\alpha)$ is bounded on $[0,1/2]$.

Fix $c>L^2/(2J)$ and put $\alpha_n=c/N$.
For large $n$, this lies in $(0,1/2]$.
Since $p\ge N$, \eqref{eq:smallalpha}--\eqref{eq:tail} give, uniformly in $m\ge0$,
\[
 \dfrac{(2\pi)^{2n+\alpha_n}}{2d!p^{\alpha_n}}
       Q_{n,m,\alpha_n}(t_p)
 \le \dfrac{L^2/2-cJ}{N}+O(N^{-2})+O(4^{-n})<0.
\]
The implied constants here depend only on $L$ and the fixed choice of $c$, not on $m$.
Proposition~\ref{prop:density} therefore excludes complete monotonicity at exponent
$d+\alpha_n$ for every $m\ge0$.
Lemma~\ref{lem:closed} gives $\delta_{n,m}<\alpha_n$, and hence
$N\sup_{m\ge0}\delta_{n,m}\le c$ for all sufficiently large $n$.
Letting $c\downarrow L^2/(2J)$ proves the asserted quantitative bound.
For fixed $m$, Lemma~\ref{lem:universal} gives
\[
N\beta_{N+m}
=\dfrac{N}{N+m}(N+m)\beta_{N+m}\longrightarrow\dfrac3\pi.
\]
Since $\delta_{n,m}\ge\beta_{N+m}$ by Theorem~\ref{thm:general}, this proves \eqref{eq:asymptoticlower}.
The upper estimate and this positive lower limit give the first assertion of \eqref{eq:exactscale}.
For the supremum, use $\sup_{m\ge0}\delta_{n,m}\ge\delta_{n,0}$ together with the uniform upper bound.
\end{proof}

\Needspace{15\baselineskip}
\begin{corollary}\label{cor:growingorders}
For every fixed $C\ge0$, with $N=2n-1$,
\begin{equation}\label{eq:growingorders}
\liminf_{n\to\infty}
N\inf_{\substack{m\in\mathbb Z_{\ge0}\\m\le CN}}\delta_{n,m}
\ge\dfrac{3}{\pi(1+C)}.
\end{equation}
Consequently, the bounds $c_C/n\le\delta_{n,m}\le C_0/n$ hold uniformly for $0\le m\le C(2n-1)$ and sufficiently large $n$, with positive constants $c_C,C_0$.
\end{corollary}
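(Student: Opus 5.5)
The plan is to combine the individual lower bound $\delta_{n,m}\ge\beta_{N+m}$ from Theorem~\ref{thm:general} with the monotonicity of $p\mapsto\beta_p$ and the limit $p\beta_p\to3/\pi$ from Lemma~\ref{lem:universal}. For $0\le m\le CN$ we have $p=N+m\le(1+C)N$. We would then show that $\beta_p$ is nonincreasing in $p$, or at least nonincreasing for large $p$. This gives
\[
\inf_{m\le CN}\delta_{n,m}\ge\inf_{N\le p\le(1+C)N}\beta_p .
\]
To avoid needing exact monotonicity of $\beta_p$, we would argue directly from the limit. Given $\varepsilon>0$, choose $P$ such that $p\beta_p\ge3/\pi-\varepsilon$ for all $p\ge P$. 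For $N\ge P$, every admissible $p$ satisfies $p\ge N\ge P$. Hence
\[
N\beta_p=\dfrac{N}{p}\,p\beta_p\ge\dfrac{1}{1+C}\left(\dfrac3\pi-\varepsilon\right),
\]
uniformly over $0\le m\le CN$. Taking the infimum over $m$, then the $\liminf$ in $n$, and then letting $\varepsilon\downarrow0$ proves \eqref{eq:growingorders}. This route uses only the limit statement in Lemma~\ref{lem:universal} and needs no monotonicity, so we would adopt it.

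For the consequence, the lower constant comes from \eqref{eq:growingorders}. For large $n$, the infimum exceeds $\tfrac12\cdot 3/(\pi(1+C)N)$. Since $N=2n-1<2n$, this yields $\delta_{n,m}\ge c_C/n$ with $c_C=3/(4\pi(1+C))$. The upper constant comes from \eqref{eq:asymptotic}. For large $n$, $\sup_m\delta_{n,m}\le (2\pi^2/J+1)/N$. Since $N\ge n$ for $n\ge1$, this gives $\delta_{n,m}\le C_0/n$ with $C_0=2\pi^2/J+1$, independent of $C$ and valid even for all $m$.

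We expect no genuine obstacle here. The only point needing care is uniformity in $m$: the threshold $P$ must be chosen in terms of $p$ rather than $n$. This is why the bound $p\ge N$, which holds for every $m\ge0$, is the key observation.
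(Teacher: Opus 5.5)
Your proposal is correct and follows the paper's proof essentially verbatim: you fix $\varepsilon$ (implicitly $0<\varepsilon<3/\pi$, which the sign step needs), use $p\beta_p\ge3/\pi-\varepsilon$ for large $p$ together with $N\le p\le(1+C)N$ and $\delta_{n,m}\ge\beta_p$, and take the infimum, then the lower limit, then $\varepsilon\downarrow0$. Your explicit constants $c_C=3/(4\pi(1+C))$ and $C_0=2\pi^2/J+1$ correctly spell out the consequence, which the paper derives in the same way from the uniform upper bound of Theorem~\ref{thm:asymptotic}.
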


\begin{proof}
Given $0<\varepsilon<3/\pi$, Lemma~\ref{lem:universal} gives
$p\beta_p\ge3/\pi-\varepsilon$ for all sufficiently large $p$.
For $N\le p=N+m\le(1+C)N$, Theorem~\ref{thm:general} therefore gives
\[
N\delta_{n,m}\ge\dfrac{N}{p}\,p\beta_p
\ge\dfrac{3/\pi-\varepsilon}{1+C}.
\]
Take the infimum, then the lower limit, and let $\varepsilon\downarrow0$.
The upper estimate follows from Theorem~\ref{thm:asymptotic}.
\end{proof}

\section{Scope of the results and open questions}

Theorems~\ref{thm:general}--\ref{thm:asymptotic} resolve the distinction between the real and integer versions of the conjectures for every $n\ge2$ and $m\ge0$.
The real degrees are nonintegers, their integer parts are exactly $2n+m-2$, and the excess has order $n^{-1}$ for fixed derivative order.
The same order holds uniformly when the derivative order grows at most proportionally to $n$, as stated in Corollary~\ref{cor:growingorders}.

The positivity criteria give sufficient conditions for each individual scaled kernel to have a nonnegative fractional derivative.
The density characterization in Proposition~\ref{prop:density} instead tests the full sum $Q_{n,m,\alpha}$.
Cancellation between summands may preserve nonnegativity beyond the range certified term by term.
Thus the lower bounds, including those with exact auxiliary kernel constants in Section~4, are not asserted to be the exact degrees.
In particular, optimality of $M_H$ in the inequality $sH'(s)\ge-M_H$ does not imply $\dcm(R_2)=2+\beta_H$.

\Needspace{10\baselineskip}
The first remaining problem is to determine the exact values of $\dcm(R_2)$ and $\dcm(-R_2')$.
A second is to decide whether $(2n-1)\delta_{n,m}$ converges for fixed $m$, and to identify its limit if it exists.
The present estimates place its lower and upper limits between $3/\pi$ and $2\pi^2/J$.
The limit $p\beta_p\to3/\pi$ describes the constructed lower bound, and does not identify the limiting coefficient of the actual degree.
Determining the optimal constants in the uniform estimates is a separate question.
Finally, the behavior as $m\to\infty$ with $n$ fixed is not settled by the asymptotics in this paper.

\Needspace{8\baselineskip}

\end{document}